\documentclass[11pt,a4paper]{article}

\usepackage[left=2.3cm, top=2.3cm,bottom=2.3cm,right=2.3cm]{geometry}

\usepackage{mathtools,amsmath,amssymb,amsthm,mathrsfs,calc,graphicx,stmaryrd,xcolor,dsfont,tikz,pgfplots,bbm,float,array,epsfig,hyperref,color}
\usetikzlibrary{calc}
\usepackage[numbers,square]{natbib}
\usepackage[british]{babel}
\usepackage{amsfonts}              
\usepackage{enumitem}
\pdfminorversion=4

\numberwithin{equation}{section}
\numberwithin{figure}{section}

\allowdisplaybreaks[4]

\newcommand{\eqdef}{\stackrel{{\text{def}}}{=}}

\newtheorem {theorem}{Theorem}[section]
\newtheorem {proposition}[theorem]{Proposition}
\newtheorem {lemma}[theorem]{Lemma}
\newtheorem {corollary}[theorem]{Corollary}

{\theoremstyle{definition}
\newtheorem{definition}[theorem]{Definition}
\newtheorem*{convention*}{Convention}

\newtheorem {example}[theorem]{Example}

}

{
\theoremstyle{remark}
}

\newcommand{\eqdistr}{\stackrel{d}{=}}
\newcommand{\Vol}{\operatorname{Vol}}

\newcommand{\Cov}{\operatorname{Cov}}

\renewcommand{\Re}{\operatorname{Re}}  
\renewcommand{\Im}{\operatorname{Im}}  

\newcommand{\ii}{{\rm{i}}}
\newcommand{\erf}{\operatorname{erf}}

\def\ba{\begin{array}}
\def\ea{\end{array}}
\def\bea{\begin{eqnarray} \label}
\def\eea{\end{eqnarray}}
\def\be{\begin{equation} \label}
\def\ee{\end{equation}}
\def\bit{\begin{itemize}}
\def\eit{\end{itemize}}
\def\ben{\begin{enumerate}}
\def\een{\end{enumerate}}

\def\lan{\langle}
\def\ran{\rangle}

\def\BB{\mathbb{B}}
\def\CC{\mathbb{C}}

\def\HH{\mathbb{H}}

\def\N{\mathbb{N}}
\def\P{\mathbb{P}}

\def\R{\mathbb{R}}
\def\RRd1{\mathbb{R}^{d+1}}
\def\SS{\mathbb{S}}








\def\dint{\textup{d}}

\newcommand{\eee}{{\rm e}}

\newcommand{\eps}{\varepsilon}
\newcommand{\pos}{\mathop{\mathrm{pos}}\nolimits}

\newcommand{\dd}{{\rm d}}
\newcommand{\bsl}{\backslash}

\DeclareMathOperator{\arccosh}{arccosh}

\def\sgn{\mathrm{sgn}}

\begin{document}

\title{\bfseries Volumes of Regular Hyperbolic Simplices}

\author{Zakhar Kabluchko and Philipp Schange}

\date{}

\maketitle

\begin{abstract}
We derive an explicit formula for the volume of a regular simplex in the hyperbolic space of any dimension.
\noindent
\bigskip
\\
\textbf{Keywords}. Hyperbolic geometry, Lobachevsky geometry, regular simplex, ideal simplex, volume, standard normal distribution function, error function, analytic continuation.  \\
\textbf{MSC 2020}. Primary: 52A55, 60D05; Secondary: 33B20, 30B40, 26B15, 51M20, 52A38.  
\end{abstract}


\section{Introduction and main result}
\subsection{Introduction}
In the Klein model of hyperbolic (Lobachevsky) geometry, the underlying space is the open $d$-dimensional unit ball $\BB^d := \{x\in \R^d: \|x\| <1\}$ endowed with the Riemannian metric
$$
\dd s^2 = \frac{(1-\|x\|^2) \|\dd x\|^2 + \langle x, \dd x \rangle^2}{(1-\|x\|^2)^2},
$$
which has constant sectional curvature $\kappa = -1$. The hyperbolic volume of a Borel set $A\subseteq \BB^d$ is
$$
\Vol_{d, -1}(A) \eqdef \int_A \frac{\dd y}{(1 - \| y \|^2)^{(d+1)/2}}.
$$

In the Klein model, hyperbolic affine subspaces are represented by the ordinary affine subspaces (intersected with $\BB^d$). A \emph{hyperbolic simplex} is just an ordinary simplex $[v_0,\ldots, v_d]$ contained in $\bar{\BB}^d:= \{x\in \R^d: \|x\| \leq 1\}$, the closed unit ball. Here, $v_0,\ldots, v_d\in \bar{\BB}^d$ are the vertices of the simplex and $[\ldots]$ denotes the convex hull.  A hyperbolic simplex is called \emph{ideal} if all of its vertices are located on the unit sphere $\SS^{d-1}$, the boundary of $\BB^d$. A hyperbolic simplex $[v_0,\ldots, v_d]$ is called \emph{regular} if any permutation of its vertices is induced by a hyperbolic isometry of $\BB^d$. For a simplex with vertices in $\BB^d$ this means that the hyperbolic distances between any two different vertices are equal to the same number $\ell>0$, the side length of the simplex. Let $\Delta_\ell^d$ be any such regular hyperbolic simplex; any other regular hyperbolic simplex with the same side length $\ell$ can be mapped to $\Delta_\ell^d$  by a hyperbolic isometry.  An \emph{ideal regular hyperbolic} simplex  $\Delta_\infty^d$ is just an ordinary regular simplex whose vertices are located on the unit sphere $\SS^{d-1}$.  Any two ideal regular hyperbolic simplices can be mapped to each other by a hyperbolic isometry.

The hyperbolic volume of an ideal regular hyperbolic  simplex $\Delta_\infty^d$ is known to be finite, for $d\geq 2$.  The following explicit values are known, see~\cite{haagerup_munkholm_simplices_max_vol_hyperbolic}, \cite[p.~207]{milnor_hyperbolic_volumes} or~\cite[Equations~(13.55),(13.56)]{kellerhals_dilogs_vol_hyperbolic_polytopes}:
$$
\Vol_{d,-1}(\Delta_\infty^d)
=
\begin{cases}
\pi, &\text{ if } d=2,\\
-3\int_{0}^{\pi/3}\log (2\sin \theta) \dint \theta = 1.01494\ldots, &\text{ if } d=3,\\
\frac{10 \pi}{3} \arcsin \frac 13 - \frac{\pi^2}{3} = 0.26889\ldots, &\text{ if } d=4.
\end{cases}
$$
The case $d=2$ is a consequence of the Gauss--Bonnet theorem, the case $d=3$ is due to Lobachevsky (who proved a formula for the volume of any ideal $3$-dimensional hyperbolic simplex, see~\cite[p.~200]{milnor_hyperbolic_volumes}), while the case $d=4$ was stated by Haagerup and Munkholm~\cite{haagerup_munkholm_simplices_max_vol_hyperbolic} (see also Exercise~11.4.3 on p.~541 in~\cite{ratcliffe_book}). For $d=5$ and $d=6$,  the values of $\Vol_{d,-1}(\Delta_\infty^d)$ can be expressed in terms of polylogarithms; see~\cite{kellerhals_vols_in_hyperbolic_5_space} and~\cite[Equation~(13.58)]{kellerhals_dilogs_vol_hyperbolic_polytopes}.
 An infinite series formula for $\Vol_{d,-1}(\Delta_\infty^d)$ is derived in~\cite[\S~4]{milnor_hyperbolic_volumes}; see also~\cite{aomoto,ribando} for more general formulas of this type.  Numerical values for $\Vol_{d,-1}(\Delta_\infty^d)$ in dimensions  $d\leq 10$ are given in~\cite[p.~207]{milnor_hyperbolic_volumes}.
The principal result of~\citet{haagerup_munkholm_simplices_max_vol_hyperbolic} is that  the ideal regular simplices  have the maximal volume among all hyperbolic simplices, and that there are no other maximizers. This result is also included in~\cite[\S~11.4]{ratcliffe_book}.  Furthermore, Haagerup and Munkholm~\cite{haagerup_munkholm_simplices_max_vol_hyperbolic} showed that  $\Vol_{d,-1}(\Delta_\infty^d) =(\eee+o(1)) \sqrt d/ d!$, as $d\to\infty$. The volume of $\Delta_\infty^d$ appears in various contexts~\cite{boeroeczky_packing_spaces_const_curv,kellerhals_ball_packing,kellerhals_reg_simpl_vol_bounds}, for example in the theory of ball packings in the hyperbolic space.


Important references on volumes of polytopes in hyperbolic and spherical space are,  besides the classical works of Lobachevsky, Bolyai and Schl\"afli, the review papers of~\citet{milnor_hyperbolic_volumes,milnor_schlafli}, \citet{vinberg_volumes_polyhedra_russ_math_surv}, \citet{abrosimov_mednykh_volumes_polytopes_spaces_const_curv,abrosimov_mednykh}, \citet{kellerhals_dilogs_vol_hyperbolic_polytopes}, the books by~\citet[Chapter~7]{alekseevski_vinberg_solodovnikov_book}, \citet{boehm_hertel_book}, \citet[Chapters~10,11]{ratcliffe_book}, and the papers by~\citet{coxeter_the_functions_of,coxeter_upper_bound_nonoverlap_spheres} and~\citet{aomoto}. There exist several explicit (but complicated) formulas for the volume of an arbitrary hyperbolic tetrahedron; see~\cite{abrosimov_mednykh_volumes_polytopes_spaces_const_curv,abrosimov_mednykh,Murakami_Yano_2005,murakami_ushijima_vol_hyperbolic_tetrahedra,abrosimov2021explicitvolumeformulahyperbolic}.  In particular, it is known~\cite[Theorem~1]{abrosimov2017} that the volume of a regular hyperbolic tetrahedron with side length $\ell$ is given by
$$
\Vol_{3,-1}(\Delta_{\ell}^3)
=
\int_0^{\ell} \frac{3a \sinh a \,  \dd a}{(1+2 \cosh a) \sqrt{(1+\cosh a)(1+3\cosh a)}} .
$$

The volumes of regular \emph{spherical} simplices in arbitrary dimension have been computed by~\citet[Section~4]{rogers} and~\citet[Lemma~4]{vershik_sporyshev}. For some probabilistic applications, see~\cite{kabluchko_zaporozhets_absorption}. A more general formula, valid for orthocentric spherical simplices, has been derived in~\cite{kabluchko_schange_angles_orthocentric_simplices}. The hyperbolic volume of the regular cube has been determined in~\cite{marshall_vol_hyperbolic_cube,smith_vol_hyperbolic_cube}.

\subsection{Volumes of regular hyperbolic simplices}
The aim of the present article is to derive an explicit formula for the volume of regular hyperbolic simplices  which is valid in any dimension $d$. The result will be stated in terms of the distribution function of the standard normal distribution, extended to complex values of the argument and  denoted by
\begin{equation}\label{eq:compl_std_normal_distr}
	\Phi: \CC \rightarrow \CC, \quad \Phi(z) \eqdef \frac{1}{2}+\frac{1}{\sqrt{2 \pi}} \int_{0}^{z} \eee^{-x^2/2} \dd x = \frac 12 + \frac 12 \erf\left(\frac{z}{\sqrt 2}\right).
\end{equation}
Here,  $\erf(y)$
denotes the error function  whose properties are documented in~\cite[Chapter~7]{NIST:DLMF}; see also~\cite[Chapter~8]{NIST:DLMF} for the closely related incomplete Gamma function.
The integral in~\eqref{eq:compl_std_normal_distr} is taken along any contour in the complex plane $\CC$ connecting $0$ to $z$.
It is well known that  $\Phi(z)$ is an entire function.

\begin{theorem}[Volume of a regular hyperbolic simplex]\label{theo:vol_regular_hyperbolic_simplex_intro}
Let $d\geq 2$.  In the $d$-dimensional hyperbolic space of constant curvature $\kappa = -1$ consider a regular $d$-dimensional hyperbolic simplex $\Delta_\ell^d$  with hyperbolic side length $\ell>0$.  Then, the hyperbolic volume of $\Delta_\ell^d$ is given by
\begin{equation*}
\frac{\sqrt 2 \, \pi^{d/2}} {\ii^d \, \Gamma(\frac{d+1}2)}
\int_{0}^{(1-\ii)\infty} \left(\Phi^{d+1} \left(\ii\, \sqrt{\frac{\cosh \ell}{1+d \cosh \ell}} \, x\right) +  \Phi^{d+1} \left(-\ii\, \sqrt{\frac{\cosh \ell}{1+d \cosh \ell}} \,  x \right)\right) \eee^{-\frac{x^2}2} \dd x.
\end{equation*}
The hyperbolic volume of the ideal  regular $d$-dimensional hyperbolic simplex $\Delta_\infty^d$ is given by
\begin{equation*}
\frac{\sqrt 2 \, \pi^{d/2}} {\ii^d \, \Gamma(\frac{d+1}2)}
\int_{0}^{(1-\ii)\infty} \left(\Phi^{d+1} \left(\frac{\ii x}{\sqrt d} \right) +  \Phi^{d+1} \left(-\frac{\ii x}{\sqrt d}\right) \right)  \eee^{-\frac{x^2}2} \dd x.
\end{equation*}
\end{theorem}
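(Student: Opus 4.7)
My plan is to reduce the volume integral to a Gaussian expectation, convert it via the Hermite--Genocchi formula to a divided difference, and resum it using the Cauchy transform of the Gaussian density to see the $\Phi^{d+1}$ structure emerge; the complex contour arises naturally because the resulting real integral diverges and must be rotated. First I would place the simplex in the Klein ball with centroid at the origin, so that its vertices take the form $v_i = \alpha e_i$ with $e_0, \ldots, e_d$ unit vectors forming a regular simplex on $\SS^{d-1}$ (satisfying $\sum_i e_i = 0$ and $\langle e_i, e_j\rangle = -1/d$ for $i \neq j$), where $\alpha \in (0, 1)$ is determined from the Klein distance formula by $\alpha^2 = d(\cosh \ell - 1)/(d \cosh \ell + 1)$. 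Parametrizing $y = \alpha\sum_i \lambda_i e_i$ by barycentric coordinates $\lambda$ on $\{\lambda_i \geq 0, \sum_i \lambda_i = 1\}$ yields $\|y\|^2 = \frac{(d+1)\alpha^2}{d}\sum_i \lambda_i^2 - \frac{\alpha^2}{d}$, together with a constant Lebesgue Jacobian.

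Next I would apply the gamma-function identity
\[
(1 - \|y\|^2)^{-(d+1)/2} = \frac{2^{(1-d)/2}}{\Gamma((d+1)/2)} \int_0^\infty x^d \, \eee^{-x^2(1 - \|y\|^2)/2} \, \dd x
\]
and swap integrations. The remaining factor $\eee^{+x^2 \|y\|^2/2}$ is represented probabilistically as $\E[\eee^{x \langle y, G\rangle}]$ with $G \sim N(0, I_d)$, turning the inner simplex integral into a Hermite--Genocchi integral $\int \eee^{x \alpha \sum_i \lambda_i \zeta_i}\, \dd \lambda$, where $\zeta_i = \langle e_i, G\rangle$ are jointly Gaussian on the hyperplane $\sum_i \zeta_i = 0$ with unit variance and pairwise covariance $-1/d$. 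The Hermite--Genocchi formula evaluates this as the divided difference $\sum_i \eee^{x\alpha \zeta_i}/\prod_{j\neq i}(x\alpha \zeta_i - x\alpha \zeta_j)$ of $\eee^{x\alpha \cdot}$ at the $\zeta_i$.

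The crucial step is to compute the Gaussian expectation of this divided difference. I would use its contour-integral form $\sum_i f(\zeta_i)/\prod_{j\neq i}(\zeta_i - \zeta_j) = \frac{1}{2\pi \ii}\oint f(w)\prod_i(w-\zeta_i)^{-1}\, \dd w$, decouple the constrained family $\zeta_i$ by writing $\zeta_i = \eta_i - \bar\eta$ with $\eta_i$ i.i.d.\ $N(0, (d+1)/d)$ and $\bar\eta = \frac{1}{d+1}\sum_i \eta_i$ independent of the $\zeta_i$, insert a Fourier representation of the delta enforcing the value of $\bar\eta$, and then apply the Gaussian Cauchy-transform identity
\[
\int_\R \frac{1}{\sigma\sqrt{2\pi}}\, \eee^{-\eta^2/(2\sigma^2)}\, \frac{\dd \eta}{w-\eta} = -\ii\frac{\sqrt{2\pi}}{\sigma}\, \eee^{-w^2/(2\sigma^2)}\, \Phi\!\left(\frac{\ii w}{\sigma}\right) \quad (\Im w > 0),
\]
together with its reflected counterpart involving $\Phi(-\ii w/\sigma)$ for $\Im w < 0$. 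Independence of the $\eta_i$ lets the expectation of $\prod_i(w - \eta_i)^{-1}$ factor into $d+1$ copies of the Cauchy transform, producing the $\Phi^{d+1}$ structure; performing the residual Gaussian integral in $\bar\eta$ and the auxiliary Fourier integral, and combining the contributions from the two half planes, one obtains the symmetric combination $\Phi^{d+1}(\ii \gamma x) + \Phi^{d+1}(-\ii \gamma x)$, with $\gamma^2 = (d + \alpha^2)/(d(d+1)) = \cosh\ell/(1 + d \cosh\ell)$ emerging after simplification.

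The outer $x$-integral along $[0, \infty)$ is then divergent, because $(d+1)\gamma^2 = (d + \alpha^2)/d > 1$ forces the integrand to grow like $\eee^{(\alpha^2/d)\, x^2/2}$; rotating the contour to $x = (1-\ii)\rho$, $\rho \geq 0$, places $\ii \gamma x$ on the critical ray $\arg z = \pi/4$ where $\Phi$ is bounded with only oscillatory corrections, while $\eee^{-x^2/2}$ becomes the pure phase $\eee^{\ii \rho^2}$, making the integral conditionally convergent. The ideal-simplex formula follows by dominated convergence as $\ell \to \infty$, in which limit $\alpha \to 1$ and $\gamma^2 \to 1/d$. The principal obstacle is the decoupling step: since the $\zeta_i$ are constrained by $\sum_i\zeta_i = 0$, the naive factorization of $\E\prod_i(w-\zeta_i)^{-1}$ fails, and one must carefully track the $\bar\eta$-lift, the various contour shifts, and the interchange of order of integration, as several intermediate integrals (and the final one) are only conditionally convergent.
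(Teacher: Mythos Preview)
Your approach is genuinely different from the paper's. The paper never manipulates the hyperbolic integral directly: it first proves the formula in the \emph{spherical} regime $\kappa\ge s$, where $\Vol_{d,\kappa}(Q)$ is the probability that a standard Gaussian vector lies in a cone; via conic duality the Gram matrix of the dual cone has the rank-one-plus-diagonal structure $\tfrac{\kappa-s}{s^2}+\tfrac{\delta_{jk}}{\tau_j^2}$, so the orthant probability factorizes after conditioning on a single auxiliary Gaussian, producing $\int_\R \prod_j \Phi(\tfrac{\tau_j}{s}\sqrt{\kappa-s}\,x)\,\phi(x)\,\dd x$ with real arguments. The passage to $\kappa<0$ is then pure analytic continuation in $\kappa$, and the rotated contour $\int_0^{(1-\ii)\infty}$ is exactly the device that makes the continued integral converge and depend analytically on $\kappa$. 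Your route---gamma identity, Gaussian lift, Hermite--Genocchi, Cauchy transform---is more direct and avoids the spherical detour, which is appealing; but the paper's conditioning trick gets the $\Phi^{d+1}$ structure with far less analytic pain than your deconvolution of the constraint $\sum_i\zeta_i=0$.

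There is, however, an internal inconsistency in your plan that signals a real gap rather than a missing detail. You correctly set up a \emph{convergent} real integral $\int_0^\infty \eee^{-x^2/2}\,\E[D(x)]\,\dd x$ (equal to the finite hyperbolic volume), then claim that $\E[D(x)]$ equals, up to constants, $\Phi^{d+1}(\ii\gamma x)+\Phi^{d+1}(-\ii\gamma x)$, and \emph{then} say the resulting $x$-integral diverges and must be rotated. These cannot all be true: if the pointwise identity held on $(0,\infty)$, the integral would converge. In fact for odd $d$ the asymptotics $\Phi(\ii t)\sim -\tfrac{1}{\sqrt{2\pi}\,\ii t}\eee^{t^2/2}$ give $\Phi^{d+1}(\ii\gamma x)+\Phi^{d+1}(-\ii\gamma x)\sim 2u^{d+1}$ with $|u|\sim \eee^{\gamma^2 x^2/2}$, and since $(d+1)\gamma^2=\tfrac{(d+1)\cosh\ell}{1+d\cosh\ell}>1$ the integrand blows up. So your formula for $\E[D(x)]$ cannot hold pointwise on the real axis. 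The failure is in the swap of the $\zeta$-expectation with the divided-difference contour integral and the subsequent deconvolution: the contour must cross $\R$ (where the $\zeta_i$ live), the Gaussian Cauchy transform jumps there, and the deconvolution of the $\bar\eta$-convolution is multiplication by $\eee^{+\sigma^2\xi^2/2}$ in Fourier---an ill-posed operation. To make the argument rigorous you would have to rotate the $x$-contour \emph{before} these steps (justifying it via analyticity of $x\mapsto\E[D(x)]$ in a sector), and only then perform the Cauchy-transform factorization on the ray $\arg x=-\pi/4$; as written, the order of operations produces an identity that is at best formal.
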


Here, $\ii= \sqrt{-1}$ and $\Gamma(\cdot)$ is the Gamma function.
Integrals of the form $\int_0^{\omega \infty} f(x) \dd x$, where $\omega \in \CC\backslash\{0\}$ and $f:\CC\to\CC$ is an entire function, are defined as improper integrals by the convention
$$
\int_0^{\omega \infty} f(x) \dd x \eqdef \lim_{B\to +\infty} \int_0^B f(\omega y) \omega \dd y,
$$
whenever the limit exists. The fact that the improper integrals appearing in Theorem~\ref{theo:vol_regular_hyperbolic_simplex_intro} are convergent is non-trivial and will be established below.

Our main result is more general than Theorem~\ref{theo:vol_regular_hyperbolic_simplex_intro}.
In fact, we establish a formula for the hyperbolic and spherical volumes of orthocentric simplices, with regular simplices arising as a special case.
This general theorem will be stated as Theorem~\ref{theo:vol_acute_orthoc_simpl_in_Klein_model_any_dimension_1-i} after the necessary preparations.


\section{Notation and facts from hyperbolic and spherical geometry}\label{sec:notation_facts_hyperbolic_geom}
\paragraph{Basic notation.}
Let $\langle \cdot,  \cdot\rangle$ denote the Euclidean scalar product and $\|\cdot\|$ the Euclidean norm.
Let $\BB^d = \{x\in \R^d: \|x\| < 1\}$ be the open unit ball in $\R^d$  and $\bar \BB^d = \{x\in \R^d: \|x\| \leq  1\}$ its closure. The $d$-dimensional unit sphere in $\R^{d+1}$ is denoted by $\SS^{d}=\{x\in\R^{d+1}:\|x\|=1\}$. Its $d$-dimensional surface area is denoted by
$$
\omega_{d+1}:= \frac{2\pi^{(d+1)/2}}{\Gamma((d+1)/2)}.
$$

The variable $\kappa\in \R$ always denotes the curvature.  In the following, we recall some basic facts from hyperbolic ($\kappa<0$) and spherical ($\kappa>0$) geometry. For a comprehensive treatment of the subject we refer to~\cite{alekseevski_vinberg_solodovnikov_book} and~\cite{ratcliffe_book}.

\paragraph{Upper half-sphere.}
For $\kappa > 0$,  the upper half-sphere of radius $\tfrac1{\sqrt{\kappa}}$ in $\R^{d+1}$ is denoted by
$$
\SS^d_+ (\kappa) :=\left\{ x= (x_0,x_1,\ldots, x_d)\in \R^{d+1}:\|x\|^2=\frac{1}{\kappa},x_0>0 \right\}.
$$
The restriction of the Euclidean scalar product in $\R^{d+1}$ to tangent spaces of $\SS^d_+(\kappa)$ defines  a Riemannian metric on $\SS^d_+(\kappa)$ with constant curvature $\kappa$.

\paragraph{Upper hyperboloid.}
Let now  $\kappa < 0$.
The Minkowski product of $x=(x_0,x_1,\ldots, x_d) \in \R^{d+1}$ and $y=(y_0,y_1,\ldots, y_d) \in \R^{d+1}$ is defined as $B(x,y)=x_0y_0 -(x_1y_1 +\ldots +x_dy_d)$. Consider the upper hyperboloid
$$
\HH^d_+(\kappa) := \left\{x = (x_0,x_1,\ldots, x_d) \in \R^{d+1}:B(x,x) = -\frac{1}{\kappa}, x_0>0 \right\}.
$$
The restriction of the Minkowski product to tangent spaces of $\HH^d_+(\kappa)$ defines a Riemannian metric on $\HH^d_+(\kappa)$ with constant curvature $\kappa$.

\paragraph{Gnomonic projection.}
The \emph{gnomonic projection} is defined as follows:
\begin{align*}
\text{For $\kappa >0$}: \qquad &\Pi_{gn}:\SS^d_+ (\kappa) \rightarrow \R^d, \qquad  (x_0,x_1,\ldots, x_d) \mapsto \frac 1 {\sqrt \kappa} \left(\frac{x_1}{x_0},\ldots, \frac{x_d}{x_0}\right)\in \R^d,
\\
\text{For $\kappa <0$}: \qquad & \Pi_{gn}: \HH^d_+(\kappa) \rightarrow \frac{1}{\sqrt{-\kappa}}\BB^d, \qquad (x_0,x_1,\ldots, x_d) \mapsto \frac 1 {\sqrt {-\kappa}} \left(\frac{x_1}{x_0},\ldots, \frac{x_d}{x_0}\right) \in \frac{1}{\sqrt{-\kappa}}\BB^d.
\end{align*}
For an illustration, see Figure~\ref{figure:gnomonic_projection}. For $\kappa = 0$, the gnominic projection $\Pi_{gn}:\R^d\to \R^d$ is the identity map.
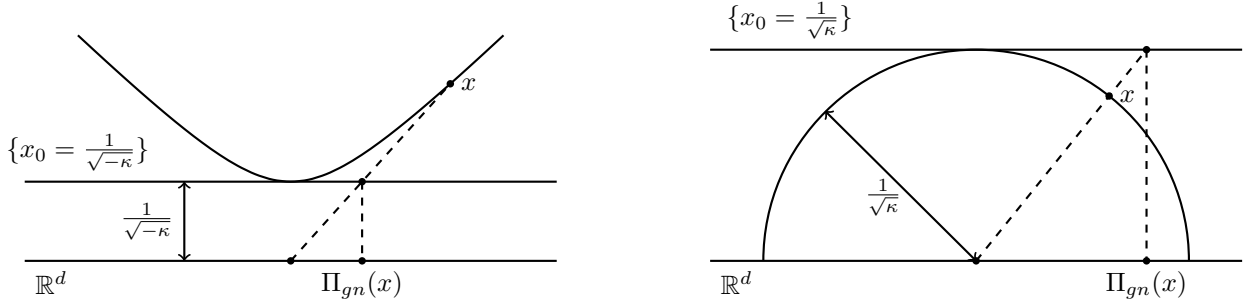
\begin{figure}
	\centering
	\begin{tikzpicture}[scale=0.7,thick,help lines/.style={draw=black!50}]	
		\draw[domain=-4:4,smooth,variable=\x] plot ({\x},{sqrt(\x*\x+2.25)});
		
		\coordinate (A) at (0,0);
		\draw (-5,1.5)--(5,1.5);
		\draw (-5,0)--(5,0);
		\draw [<->] (-2,0)--(-2,1.5);
		\coordinate[label=left: \small ${\frac{1}{\sqrt{-\kappa}}}$] (h1) at (-2,0.75);
		\coordinate[label=below: \small $\R^d$] (h2) at (-4.5,0);
		\coordinate[label=above: \small {$\{x_0=\frac{1}{\sqrt{-\kappa}}\}$}] (h3) at (-4,1.5);
		
		\coordinate[label=right:\small $x$] (x) at (3,3.354);
		\coordinate (p1) at (1.342,1.5);
		\coordinate[label=below:\small 	{$\Pi_{gn}(x)$}] (u) at (1.342,0);
		
		\draw[dashed] (A)--(x);
		\draw[dashed] (p1)--(u);
		
		\foreach \point in {A,x,p1,u} \fill [black] (\point) circle (2pt);
	\end{tikzpicture}
	\hfill
	\begin{tikzpicture}[scale=0.7,thick,help lines/.style={draw=black!50}]
		\coordinate (A) at ($ (0,0) $);
		\coordinate (B) at ($ (4,0) $);
		\coordinate [label=left: \small ${\frac{1}{\sqrt{\kappa}}}$] (C) at (-1.2,1.2);
		\draw (B) let \p1 = ($ (A) - (B) $) in arc (0:180:{veclen(0,4)});
		\draw [<->] (A)--(-2.84,2.84);
		
		\draw (-5,4)--(5, 4);
		\coordinate[label=above: \small ${ \{x_0=\frac{1}{\sqrt{\kappa}}\} }$] (E) at (-3.5, 4);
		\draw (-5,0)--(5,0);
		\coordinate[label=below: \small $\R^d$] (h2) at (-4.5,0);
		
		\coordinate[label=right:\small $x$] (x) at (2.5, 3.1225);
		\coordinate (proj) at (3.203,4);
		\coordinate[label=below: \small 	{$\Pi_{gn}(x)$}] (u) at (3.203,0);
		
		\draw[dashed] (0,0)--(proj);
		\draw[dashed] (proj)--(u);
		
		\foreach \point in {A,x,proj,u} \fill [black] (\point) circle (2pt);
	\end{tikzpicture}
	\caption{Gnomonic projection: Hyperbolic case (left) and spherical case (right)}
	\label{figure:gnomonic_projection}
\end{figure}

\paragraph{Klein model.}
The target space of the gnomonic projection is the underlying space of the Klein model, which will be denoted by
\begin{align*}
\BB^d(\kappa):=
\begin{cases}
\R^d, & \text{ if } \kappa\geq 0,\\
\frac{1}{\sqrt{-\kappa}}\BB^d = \left\{x\in \R^d: \|x\| < \tfrac 1 {\sqrt{-\kappa}}\right\}, & \text{ if } \kappa<0.
\end{cases}
\end{align*}
The push-forward of the Riemannian metric on $\SS^d_+(\kappa)$ (if $\kappa >0$), $\HH^d_+(\kappa)$ (if $\kappa <0$) or the Euclidean metric on $\R^d$ (if $\kappa = 0$), defines a Riemannian metric on $\BB^d(\kappa)$ given by
\begin{equation} \label{eq:thm_metric_g_ij}
\dd s^2 = \frac{(1+\kappa \|u\|^2)\|\dd u\|^2 - \kappa \langle u, \dd u\rangle^2}{(1+\kappa \|u\|^2)^2}.
\end{equation}
The hyperbolic (if $\kappa <0$),  spherical (if $\kappa >0$) or Euclidean (if $\kappa = 0$) volume  of a Borel-measurable set $A \subseteq \BB^d(\kappa)$ is given by
\begin{equation} \label{eq:thm_volume}
		\Vol_{d, \kappa}(A) = \int_A \frac{\dd y}{(1+\kappa \| y \|^2)^{(d+1)/2}}.
\end{equation}
Formulas~\eqref{eq:thm_metric_g_ij} and~\eqref{eq:thm_volume} can be found, for example, in~\cite[p.~523]{ratcliffe_book}.  For $\kappa >0$,  the total spherical volume of $\BB^d(\kappa)=\R^d$ is finite and given by
$$
\Vol_{d, \kappa}(\R^d)=\frac{1}{2} \frac{\omega_{d+1}}{\kappa^{d/2}}.
$$
In the hyperbolic case, when $\kappa<0$, the closure of $\BB^d(\kappa)$ is denoted by $\bar \BB^d(\kappa) := \frac{1}{\sqrt{-\kappa}}\bar \BB^d$.
For $\kappa \geq 0$ we write $\bar \BB^d(\kappa):= \R^d$.

\section{Main result}

\subsection{The setting: Orthocentric simplices}

In the next definition we introduce the class of simplices $[v_0,\ldots,v_d]\subseteq \R^d$ for which  the spherical and hyperbolic volumes will be computed.
\begin{definition}[Vectors in orthocentric position]\label{def:orthocentric_position}
Let $d\in \N$, $\tau_0,\ldots, \tau_d >0$ and put $s= \tau_0^2 + \ldots +\tau_d^2$.  We say that vectors $v_0,\ldots, v_d \in \R^d$ are in \emph{orthocentric position} with parameters $\tau_0,\ldots, \tau_d$ if
\begin{equation}\label{eq:scalar_prod_orthocentric_vectors}
\lan v_j,v_k \ran = -\frac 1s,
\quad
0\leq j \neq k \leq d,
\quad
\|v_j\|^2
=
-\frac{1}{s} + \frac{1}{\tau_j^2},
\quad
0\leq j  \leq d.
\end{equation}
\end{definition}
\begin{example}[Regular simplices]
If $\tau_0=\ldots = \tau_d = \tau>0$, then $\|v_j-v_k\|^2 = 2/\tau^2$ for all $j\neq k$, which means that the simplex $[v_0,\ldots, v_d]$ is regular. Moreover, the center of this simplex is at $0$. To prove this, one checks that $\|v_0+\ldots+v_d\|^2 = 0$. Conversely, if $[v_0,\ldots, v_d]\subseteq \R^d$ is a regular simplex with $v_0+\ldots+v_d = 0$, then $v_0,\ldots, v_d$ satisfy~\eqref{eq:scalar_prod_orthocentric_vectors} with equal $\tau_j$'s.
\end{example}
\begin{example}[Orthocentric simplices]
To motivate Definition~\ref{def:orthocentric_position}, let $e_0,\ldots, e_d$ be the standard orthonormal basis of $\R^{d+1}$ and consider the $d$-dimensional simplex  $[\tfrac {e_0}{\tau_0}, \ldots, \tfrac {e_d}{\tau_d}]$. This simplex is orthocentric~\cite{Edmonds2005}: its altitudes intersect in the point $H := \tfrac{\tau_0 e_0 + \ldots + \tau_d e_d}{\tau_0^2+\ldots + \tau_d^2}$; see, e.g.,~\cite[Example 3.3]{kabluchko_schange_angles_orthocentric_simplices}. A computation shows that for all $j,k \in \{0,\ldots, d\}$,
$$
\left\lan \frac{e_j}{\tau_j}-H,\frac{e_k}{\tau_k} - H \right\ran
=
\left\lan \frac{e_j}{\tau_j} - \frac{\tau_0 e_0 + \ldots + \tau_d e_d}{\tau_0^2+\ldots + \tau_d^2}, \frac{e_k}{\tau_k} - \frac{\tau_0 e_0 + \ldots + \tau_d e_d}{\tau_0^2+\ldots + \tau_d^2} \right\ran
=
-\frac{1}{s} + \frac{\delta_{jk}}{\tau_j^2},
$$
where $\delta_{jk}$ is the Kronecker delta. These are the same scalar products as in~\eqref{eq:scalar_prod_orthocentric_vectors}.
Hence, the simplices $[v_0,\ldots, v_d]$ and $[\tfrac {e_0}{\tau_0}, \ldots, \tfrac {e_d}{\tau_d}]$ are isometric. Moreover, the simplex $[v_0,\ldots, v_d]$ is orthocentric: its altitudes intersect at $0$.  Although we shall not need this fact, let us mention that the following converse statement holds~\cite[Proposition~3.4]{kabluchko_schange_angles_orthocentric_simplices}: If the altitudes of a $d$-dimensional simplex $[v_0,\ldots, v_d]\subseteq \R^d$ intersect at $0$ and $0$ belongs to the interior of the simplex, then $v_0,\ldots, v_d$ satisfy~\eqref{eq:scalar_prod_orthocentric_vectors} for some $\tau_0,\ldots, \tau_d>0$.
\end{example}

Our aim is to compute the spherical or hyperbolic volume $\Vol_{d,\kappa}(Q)$ of the simplex $Q:= [v_0,\ldots, v_d]$, where $v_0,\ldots, v_d$ are as in Definition~\ref{def:orthocentric_position}. Of course, this is only possible if $Q \subseteq \bar \BB^d(\kappa)$.
In the next lemma we specify the range of $\kappa$ for which this condition  holds.

\begin{lemma}\label{lemma:lower_limit_kappa_simplex}
Let $v_0,\ldots, v_d\in \R^d$ be vectors as in Definition~\ref{def:orthocentric_position}.  Then, $[v_0,\ldots, v_d] \subseteq \BB^d(\kappa)$ if and only if $\kappa>\kappa_0$, and $[v_0,\ldots, v_d] \subseteq \bar \BB^d(\kappa)$ if and only if $\kappa \geq \kappa_0$, where
$$
\kappa_0
=
-\min_{0\leq j \leq d}\frac{\tau_j^2 s}{s- \tau_{j}^2} = -\min_{0\leq j \leq d} \frac{1}{1/\tau_j^2 - 1/s} <0.
$$
If $\tau_0= \min \{\tau_0,\ldots, \tau_d\}$, then $\kappa_0 =-\frac{\tau_0^2 s}{s-\tau_0^2}$.
\end{lemma}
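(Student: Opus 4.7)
The plan is to reduce the containment of the whole simplex to a pointwise condition on its vertices and then translate that condition into an inequality on $\kappa$. The case $\kappa \geq 0$ is immediate since $\bar{\BB}^d(\kappa) = \R^d$ and since $\kappa_0 < 0$ implies $\kappa > \kappa_0$ automatically; I will therefore concentrate on $\kappa < 0$, for which $\bar{\BB}^d(\kappa) = \{x\in\R^d : \|x\| \leq 1/\sqrt{-\kappa}\}$ and $\BB^d(\kappa) = \{x : \|x\| < 1/\sqrt{-\kappa}\}$.

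First I would invoke convexity: the open (resp.~closed) ball of radius $1/\sqrt{-\kappa}$ centred at $0$ is convex, so $[v_0, \ldots, v_d] \subseteq \BB^d(\kappa)$ iff $\|v_j\| < 1/\sqrt{-\kappa}$ for every $j$, and similarly with strict inequalities replaced by non-strict for the closed ball. Next, using the defining identities~\eqref{eq:scalar_prod_orthocentric_vectors} of orthocentric position, the squared norms are given directly by
\begin{equation*}
\|v_j\|^2 = -\frac{1}{s} + \frac{1}{\tau_j^2} = \frac{1}{\tau_j^2} - \frac{1}{s} > 0, \qquad 0 \leq j \leq d,
\end{equation*}
where positivity follows from $s = \tau_0^2 + \ldots + \tau_d^2 > \tau_j^2$. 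The condition $\|v_j\|^2 \leq -1/\kappa$ for all $j$ is then equivalent (since $-\kappa > 0$ and $1/\tau_j^2 - 1/s > 0$) to
\begin{equation*}
-\kappa \leq \frac{1}{1/\tau_j^2 - 1/s} = \frac{\tau_j^2 s}{s - \tau_j^2} \qquad \text{for all } 0 \leq j \leq d,
\end{equation*}
that is, to $-\kappa \leq \min_j \tau_j^2 s/(s - \tau_j^2)$, which is exactly $\kappa \geq \kappa_0$. The strict version of the same chain of equivalences gives $[v_0,\ldots,v_d] \subseteq \BB^d(\kappa)$ iff $\kappa > \kappa_0$.

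Finally, to obtain the simplified expression when $\tau_0 = \min\{\tau_0,\ldots,\tau_d\}$, I would note that the function $t \mapsto ts/(s-t)$ is strictly increasing on $(0,s)$ (its derivative equals $s^2/(s-t)^2 > 0$), so the minimum of $\tau_j^2 s/(s - \tau_j^2)$ over $j$ is attained at the smallest $\tau_j^2$, yielding $\kappa_0 = -\tau_0^2 s/(s - \tau_0^2)$. No real obstacle arises; the only point requiring mild care is keeping the direction of the inequalities straight when dividing by the negative quantity $\kappa$, and treating the open/closed ball cases in parallel.
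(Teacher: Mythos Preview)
Your proof is correct and follows essentially the same approach as the paper's: reduce to the vertex condition $\|v_j\|^2 < -1/\kappa$ (resp.\ $\leq$) and plug in the formula for $\|v_j\|^2$ from~\eqref{eq:scalar_prod_orthocentric_vectors}. You are in fact slightly more thorough, since you explicitly invoke convexity to justify the reduction to vertices and you spell out the monotonicity argument for the last claim about $\tau_0 = \min_j \tau_j$, both of which the paper leaves implicit.
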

\begin{proof}
For $\kappa \geq 0$, we have $\BB^d(\kappa) = \R^d$ and the condition $[v_0,\ldots, v_d] \subseteq \BB^d(\kappa)$ is satisfied.
It remains to consider $\kappa<0$. In this case, the Klein model is   $\BB^d(\kappa) = \frac{1}{\sqrt{-\kappa}}\BB^d$.  Then, $[v_0,\ldots, v_d] \subseteq \BB^d(\kappa)$ if and only if $\|v_j\|^2 < -1/\kappa$ for every $j\in \{0,\ldots, d\}$.  Using~\eqref{eq:scalar_prod_orthocentric_vectors}, this can be written as $-\frac{1}{s} + \frac{1}{\tau_j^2}<-\frac{1}{\kappa}$ for all $j\in \{0,\ldots, d\}$, which yields the claim.
\end{proof}

In the next proposition, we compute the \emph{hyperbolic} side lengths of the simplex $[v_0,\ldots, v_d]$.
\begin{proposition}\label{prop:side_lengths_hyperbolic_orthocentric_simplex}
Let $v_0,\ldots, v_d\in \R^d$ be vectors as in Definition~\ref{def:orthocentric_position} and $\kappa \in (\kappa_0, 0)$.  Then, for all $0\leq j\neq k \leq d$, the hyperbolic distance between $v_j$ and $v_k$ in the Klein model $\BB^d(\kappa)$ is given by
$$
\ell_{j,k} = \frac{1}{\sqrt{-\kappa}} \arccosh \left( \frac{s-\kappa}{\sqrt{s-\kappa+\kappa \frac{s}{\tau_j^2}} \sqrt{s - \kappa +\kappa \frac{s}{\tau_k^2}}} \right).
$$
\end{proposition}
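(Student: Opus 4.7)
The plan is to reduce the problem to a standard distance formula in the Klein model and then substitute the orthocentric scalar products. The key identity I will use is that for two points $v_j, v_k \in \BB^d(\kappa)$ with $\kappa<0$, the hyperbolic distance satisfies
\[
\cosh\bigl(\sqrt{-\kappa}\, d(v_j,v_k)\bigr) = \frac{1+\kappa\langle v_j,v_k\rangle}{\sqrt{(1+\kappa\|v_j\|^2)(1+\kappa\|v_k\|^2)}}.
\]
This is the (suitably rescaled) Cayley--Klein distance formula. I would either quote it from Ratcliffe~\cite{ratcliffe_book}, or derive it in a line or two by lifting $v_j,v_k$ through the gnomonic projection $\Pi_{gn}^{-1}:\BB^d(\kappa)\to\HH^d_+(\kappa)$: writing a point $v\in\BB^d(\kappa)$ as the image of $x=(x_0,x_1,\ldots,x_d)\in\HH^d_+(\kappa)$ gives $x_0=1/(\sqrt{-\kappa}\sqrt{1+\kappa\|v\|^2})$ and $(x_1,\ldots,x_d)=v/\sqrt{1+\kappa\|v\|^2}$, and then the hyperboloid formula $\cosh(\sqrt{-\kappa}\,d(x,y))=-\kappa B(x,y)$ immediately produces the identity above.

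Next I will plug in the data from Definition~\ref{def:orthocentric_position}: for $j\neq k$, $\langle v_j,v_k\rangle=-1/s$ and $\|v_j\|^2 = 1/\tau_j^2 - 1/s$. The numerator becomes
\[
1+\kappa\langle v_j,v_k\rangle = 1-\frac{\kappa}{s}=\frac{s-\kappa}{s}.
\]
Each factor under the square root in the denominator equals
\[
1+\kappa\Bigl(\tfrac{1}{\tau_j^2}-\tfrac{1}{s}\Bigr) = \frac{s-\kappa+\kappa s/\tau_j^2}{s},
\]
so the denominator equals $\frac{1}{s}\sqrt{(s-\kappa+\kappa s/\tau_j^2)(s-\kappa+\kappa s/\tau_k^2)}$. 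Cancelling the factor $1/s$ between numerator and denominator and taking $\arccosh$ yields exactly the stated formula for $\ell_{j,k}$.

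The only thing requiring care is well-definedness: one must check that the argument of $\arccosh$ is indeed $\geq 1$, equivalently that the two quantities $s-\kappa+\kappa s/\tau_j^2$ under the square root are positive. Both facts follow from the assumption $\kappa\in(\kappa_0,0)$ combined with Lemma~\ref{lemma:lower_limit_kappa_simplex}: the inequality $\kappa>\kappa_0$ is equivalent to $1+\kappa\|v_j\|^2>0$ for every $j$, which is exactly the positivity needed. The fact that the whole fraction is $\geq 1$ then follows from Cauchy--Schwarz (or, more directly, from the observation that the left-hand side is $\cosh$ of a real number by construction).

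I do not anticipate a genuine obstacle here; the statement is essentially a substitution exercise once the right Klein-model distance formula is in place. If I had to identify a delicate point, it would be getting the factors of $\kappa$ and $s$ matched correctly after the substitution — the numerator $(s-\kappa)/s$ and the $1/s$ coming from each denominator factor need to cancel cleanly, which is what produces the symmetric appearance of the final expression.
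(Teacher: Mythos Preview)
Your proposal is correct and follows essentially the same approach as the paper: both lift $v_j,v_k$ to the hyperboloid via the inverse gnomonic projection to obtain the Klein-model distance formula $\cosh(\sqrt{-\kappa}\,\ell_{j,k}) = (1+\kappa\langle v_j,v_k\rangle)/\sqrt{(1+\kappa\|v_j\|^2)(1+\kappa\|v_k\|^2)}$, and then substitute the orthocentric scalar products from Definition~\ref{def:orthocentric_position}. Your version is slightly more explicit about the algebraic cancellation of the $1/s$ factors and about well-definedness, which the paper leaves implicit.
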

\begin{proof}
Let $\Pi_{gn}^{-1}: \BB^d(\kappa) \to \HH^d_+(\kappa)$ be the inverse of the gnomonic projection; see Section~\ref{sec:notation_facts_hyperbolic_geom}.  The hyperbolic distance between $v_j\in\BB^d (\kappa)$ and $v_k\in\BB^d (\kappa)$ is
\begin{equation*}
\ell_{j,k}
=
\frac{1}{\sqrt{-\kappa}} \arccosh \left( -\kappa B\left(\Pi_{gn}^{-1}(v_j) , \Pi_{gn}^{-1}(v_k)\right) \right).
\end{equation*}
Let $e_0, e_1, \ldots, e_d$ be the standard orthonormal basis of $\R^{d+1}$. We identify the space $\R^d$ in which $Q$ is contained with the linear hull of $e_1,\ldots, e_d$.
The definition of the gnomonic projection yields that $\Pi_{gn}^{-1} v_j = (v_j + e_0/\sqrt{-\kappa})/\sqrt{1+\kappa \|v_j\|^2}\in \HH^d_+(\kappa)$ for all $j\in \{0, \ldots, d\}$. Hence,
\begin{align*}
\ell_{j,k}
&=
\frac 1{\sqrt{-\kappa}} \arccosh \left( -\kappa B\left(\frac{v_j + e_0/\sqrt{-\kappa}}{\sqrt{1+\kappa \|v_j\|^2}} , \frac{v_k + e_0/\sqrt{-\kappa}}{\sqrt{1+\kappa \|v_k\|^2}}\right) \right)
\\
&=
\frac{1}{\sqrt{-\kappa}} \arccosh\left( -\kappa \frac{\frac{1}{-\kappa} - \lan v_j,v_k \ran}{\sqrt{1+\kappa \|v_j\|^2} \sqrt{1+\kappa \|v_k\|^2}} \right).
\end{align*}
Plugging~\eqref{eq:scalar_prod_orthocentric_vectors}  into this formula completes the proof.
\end{proof}

\subsection{Statement of the main result: Hyperbolic volume of orthocentric simplices}
The next theorem is our main result. It gives an explicit formula for the volume of an orthocentric simplex in a spherical or hyperbolic geometry of constant curvature $\kappa$.
\begin{theorem}[Volume of an orthocentric simplex, spherical or hyperbolic]
\label{theo:vol_acute_orthoc_simpl_in_Klein_model_any_dimension_1-i}
Fix $d\geq 2$, $\tau_0,\ldots, \tau_d>0$ and define $s=\tau_0^2 +\ldots + \tau_d^2$. Consider a simplex $Q := [v_0, \ldots, v_d]$, where  $v_0,\ldots, v_d\in \R^d$ are vectors in orthocentric position with parameters $\tau_0,\ldots, \tau_d$; see Definition~\ref{def:orthocentric_position}. Let $\kappa \neq 0$ be such that $Q\subseteq \bar{\BB}^d(\kappa)$; see Lemma~\ref{lemma:lower_limit_kappa_simplex}.
Then,
\begin{equation}\label{eq:vol_acute_orthoc_simpl_in_Klein_model_even_and_odd_upper}
\Vol_{d,\kappa}(Q)
=
\frac{\omega_{d+1}}{\sqrt{\kappa}^{d}} \frac1{\sqrt{2\pi}} \int_{0}^{(1-\ii)\infty} \left( \prod_{j=0}^d \Phi \left( \frac{\tau_j}{s} \sqrt{\kappa -s} x \right) + \prod_{j=0}^d \Phi \left(- \frac{\tau_j}{s} \sqrt{\kappa -s} x \right) \right) \eee^{-\frac{x^2}2} \dd x,
\end{equation}
with the convention that $\sqrt{-r} = \ii \sqrt r$ for $r\geq 0$.
The formula remains valid if $\int_{0}^{(1-\ii)\infty}$ is replaced by $\int_{0}^{(1+\ii)\infty}$, this time with the opposite convention  $\sqrt{-r} = -\ii \sqrt r$ for $r\geq 0$.
\end{theorem}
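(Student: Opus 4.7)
The plan is to derive the formula in the spherical regime $\kappa>0$ by a Gaussian orthant / solid-angle argument, and then extend to the hyperbolic range $\kappa\in(\kappa_0,0)$ by analytic continuation along the contour $(0,(1-\ii)\infty)$. For $\kappa>0$ the inverse gnomonic projection lifts the vertices to unit vectors $\hat v_j:=\sqrt\kappa\,\Pi_{gn}^{-1}(v_j)\in\SS^d\subset\R^{d+1}$, whose spherical simplex has area $\kappa^{d/2}\Vol_{d,\kappa}(Q)$, so the Gaussian realisation of the solid angle gives
\[
\Vol_{d,\kappa}(Q)\;=\;\frac{\omega_{d+1}}{\kappa^{d/2}}\,\P(g\in C),\qquad C:=\pos(\hat v_0,\ldots,\hat v_d),
\]
with $g\sim\mathcal N(0,I_{d+1})$. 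Writing $g=\sum_j\alpha_j\hat v_j$, the event $\{g\in C\}$ becomes $\{\alpha_j\ge 0\ \forall j\}$, and $\alpha$ is a centred Gaussian with covariance $G^{-1}$, where $G_{jk}=\langle\hat v_j,\hat v_k\rangle$.

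Inserting Definition~\ref{def:orthocentric_position} into the Gram entries reveals that $G$ is a rank-one perturbation of a diagonal matrix, $G=D_0+\mu\,\gamma\gamma^\top$ with $\mu=(s-\kappa)/s$, and Sherman--Morrison then yields $\Cov(\alpha)=(\kappa s)^{-1}\bigl(\text{diag}(A_j)-\beta\beta^\top\bigr)$ with $A_j=(s-\kappa)\tau_j^2+\kappa s$ and $\beta_j=(\tau_j/s)\sqrt{A_j(s-\kappa)}$. The corresponding characteristic function factorises as
\[
\E\bigl[\eee^{\ii t^\top\alpha}\bigr]\;=\;\exp\!\Bigl(-\tfrac{1}{2\kappa s}\,t^\top\text{diag}(A_j)\,t\Bigr)\cdot\exp\!\Bigl(\tfrac{(t^\top\beta)^2}{2\kappa s}\Bigr),
\]
and, using the identity $\E[\eee^{-Zc}]=\eee^{c^2/2}$ for $Z\sim\mathcal N(0,1)$, the second factor realises $\alpha$ conditionally on $Z$ as a product of independent Gaussians with imaginary means $\ii Z\beta_j/\sqrt{\kappa s}$. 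Hence
\[
\P(\alpha_j\ge 0\ \forall j)\;=\;\E_Z\!\left[\prod_{j=0}^d\Phi\!\left(\frac{\tau_j}{s}\sqrt{\kappa-s}\,Z\right)\right]
\]
under the convention $\sqrt{-r}=\ii\sqrt r$, the extension of $\Phi$ to complex arguments being justified by Fourier inversion and a contour shift in the horizontal strip. Symmetrising $Z\mapsto -Z$ then yields the $\prod\Phi+\prod\Phi(-\cdot)$ form of the theorem, with the exact prefactor $\omega_{d+1}/(\kappa^{d/2}\sqrt{2\pi})$.

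A direct asymptotic estimate of $\Phi$ at large imaginary argument shows that the real-axis integrand is dominated by $\eee^{-\kappa z^2/(2s)}$, hence integrable for every $\kappa>0$. In this spherical range, the contour integral along $(0,(1-\ii)\infty)$ equals the real-axis integral by Cauchy's theorem applied to the triangle $0\to R\to(1-\ii)R\to 0$, whose slanted arc vanishes as $R\to\infty$; this proves the theorem for $\kappa>0$.

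The decisive step, and the main obstacle, is the extension to $\kappa\in(\kappa_0,0)$, where the real-axis integrand now grows like $\eee^{|\kappa|z^2/(2s)}$ and the real integral diverges. On the contour $(0,(1-\ii)\infty)$, however, the weight $\eee^{-x^2/2}$ becomes the Fresnel oscillation $\eee^{\ii r^2}$, and a careful asymptotic analysis of $\prod_j\Phi\bigl(\pm(\tau_j/s)\sqrt{\kappa-s}(1-\ii)r\bigr)$ for large $r$ (using $1-\Phi(z)\sim\eee^{-z^2/2}/(z\sqrt{2\pi})$ as $\Re z\to +\infty$) shows that the integral is conditionally convergent and depends analytically on $\kappa$ on a complex neighbourhood of $(\kappa_0,\infty)\setminus\{0\}$. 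Since $\Vol_{d,\kappa}(Q)$ is itself analytic in $\kappa$ on the same domain, the identity established for spherical $\kappa$ extends by analytic continuation, completing the proof.
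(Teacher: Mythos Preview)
Your overall architecture---Gaussian solid-angle formula in the spherical regime, followed by analytic continuation in $\kappa$ to the hyperbolic range---matches the paper's. Your treatment of the spherical case is in fact a pleasant variant: by working on the primal cone and linearising the rank-one part of the density via a Hubbard--Stratonovich identity, you obtain the formula for \emph{all} $\kappa>0$ at once, whereas the paper passes to the dual cone and gets it first only for $\kappa\geq s$ (where the common Gaussian factor is real) and must already invoke analytic continuation to reach $0<\kappa<s$. Your claim that the real-axis integrand is dominated by $\eee^{-\kappa z^2/(2s)}$ for $\kappa>0$ is correct, and your contour-rotation argument can be made to work (the paper does this as Proposition~\ref{prop:rotate_contour_integration}), though the estimate on the arc is less innocent than ``the slanted arc vanishes'': near the bisector the exponential weight has modulus $1$ and one needs the polynomial decay coming from the $\Phi$-factors.

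The genuine gap is the final paragraph. You assert that on the ray $(0,(1-\ii)\infty)$ the integral is conditionally convergent and depends analytically on $\kappa$ in a complex neighbourhood of $(\kappa_0,0)$, but you give no mechanism for this. On that ray the weight $\eee^{-x^2/2}$ becomes the pure oscillation $\eee^{\ii r^2}$, and the $\Phi$-product is merely bounded (Corollary~\ref{cor:Phi_convergence_3}); a bounded oscillatory integrand does not yield convergence, let alone analytic dependence on a parameter, without further work. This is exactly the step the paper singles out as ``the most nontrivial step and \ldots\ the main contribution of the present work'': it is handled by a double integration by parts (Lemma~\ref{lemma:integral_identity_for_angle_analytic} and Proposition~\ref{prop:integral_on_upper_halfplane}) that converts the conditionally convergent integral into a sum of absolutely convergent ones with explicit integrable majorants $x^{-3/2}$ and $x^{-2}$, from which continuity on the closed half-plane and analyticity on the open half-plane follow by dominated convergence. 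Without supplying such an argument---or an equivalent one---your analytic-continuation step is unsupported. You also need, but only state, that $\kappa\mapsto\Vol_{d,\kappa}(Q)$ is analytic on $\CC\setminus(-\infty,\kappa_0]$; the paper proves this separately (Lemma~\ref{lemma:Vol_d_kappa_analytic}) via Morera's theorem.
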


Before giving the proof of Theorem~\ref{theo:vol_acute_orthoc_simpl_in_Klein_model_any_dimension_1-i}, let us derive some of its corollaries. Taking $\kappa = -1$ in these corollaries gives Theorem~\ref{theo:vol_regular_hyperbolic_simplex_intro}.

\begin{theorem}[Volume of a regular hyperbolic simplex]\label{theo:vol_regular_hyperbolic_simplex}
Let $d\geq 2$ and $\kappa  < 0$. Consider a  regular $d$-dimensional hyperbolic simplex $\Delta_\ell^d(\kappa)$ in $\BB^d(\kappa)$ with hyperbolic side length $\ell\in (0,\infty)$. Then,  the hyperbolic volume of $\Delta_\ell^d(\kappa)$ is given by
\begin{equation}\label{eq:vol_regular_simplex}
\Vol_{d,\kappa}(\Delta_\ell^d(\kappa))
=
\frac{\omega_{d+1}}{\ii^d |\kappa|^{d/2}}  \frac1{\sqrt{2\pi}} \int_{0}^{(1-\ii)\infty} \left(\Phi^{d+1} (\ii \sqrt{\lambda}  x) +  \Phi^{d+1} (-\ii \sqrt{\lambda} x) \right) \eee^{-\frac{x^2}2} \dd x,
\end{equation}
where $\lambda = \cosh(\ell \sqrt{-\kappa})/(1+d \cosh(\ell\sqrt{-\kappa}))$. 
\end{theorem}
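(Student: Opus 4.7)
The strategy is to specialize Theorem~\ref{theo:vol_acute_orthoc_simpl_in_Klein_model_any_dimension_1-i} to the isotropic case $\tau_0 = \ldots = \tau_d =: \tau$, for which $[v_0,\ldots,v_d]$ is a regular simplex by the first example following Definition~\ref{def:orthocentric_position}, and then to invert the side-length formula from Proposition~\ref{prop:side_lengths_hyperbolic_orthocentric_simplex} in order to express $\tau$ (equivalently $s = (d+1)\tau^2$) in terms of the given hyperbolic side length $\ell$. With these substitutions, each factor in the products appearing in~\eqref{eq:vol_acute_orthoc_simpl_in_Klein_model_even_and_odd_upper} becomes identical, so the products collapse to $(d+1)$-th powers, and the task reduces to checking that the argument of $\Phi$ reduces to $\pm\ii\sqrt{\lambda}\,x$ and that the prefactor simplifies correctly.

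The first concrete step is to apply Proposition~\ref{prop:side_lengths_hyperbolic_orthocentric_simplex}. With $\tau_j=\tau$, $s/\tau_j^2 = d+1$, so setting $c:=\cosh(\ell\sqrt{-\kappa})$ yields the identity
\begin{equation*}
c = \frac{s-\kappa}{s-\kappa+\kappa(d+1)} = \frac{s-\kappa}{s+\kappa d},
\end{equation*}
which I would solve for $s$, obtaining $s = \kappa(cd+1)/(1-c)$. Note $c>1$ and $\kappa<0$, so $s>0$ as required. I would also briefly confirm that $\kappa>\kappa_0$ (so that Theorem~\ref{theo:vol_acute_orthoc_simpl_in_Klein_model_any_dimension_1-i} applies): in the isotropic case $\kappa_0 = -(d+1)\tau^2/d$, and a short computation shows $\kappa=\kappa_0$ corresponds to the ideal limit $\ell\to\infty$, so strict inequality holds for every finite $\ell$.

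The second step is the key algebraic simplification: I need to show that
\begin{equation*}
\left(\frac{\tau_j}{s}\right)^{\!2}(\kappa-s) \;=\; -\lambda \;=\; -\frac{c}{cd+1}.
\end{equation*}
Using $\tau_j^2/s = 1/(d+1)$, this reduces to checking $(\kappa-s)/((d+1)s) = -c/(cd+1)$, i.e.\ $\kappa/s - 1 = -c(d+1)/(cd+1)$, which follows immediately from $\kappa/s = (1-c)/(cd+1)$, itself a direct consequence of the formula for $s$ obtained above. Under the convention $\sqrt{-r}=\ii\sqrt{r}$ for $r\geq 0$, this gives $(\tau_j/s)\sqrt{\kappa-s}\,x = \ii\sqrt{\lambda}\,x$, matching the argument of $\Phi$ in~\eqref{eq:vol_regular_simplex}.

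The final step is the prefactor. Since $\kappa<0$, the same convention yields $\sqrt{\kappa}^{\,d} = \ii^d|\kappa|^{d/2}$, so $\omega_{d+1}/\sqrt{\kappa}^{\,d} = \omega_{d+1}/(\ii^d|\kappa|^{d/2})$, matching~\eqref{eq:vol_regular_simplex}. Collapsing the two identical products in~\eqref{eq:vol_acute_orthoc_simpl_in_Klein_model_even_and_odd_upper} into $\Phi^{d+1}(\pm\ii\sqrt{\lambda}\,x)$ then completes the proof. The only mildly technical point is verifying the sign conventions for the square roots and confirming applicability of Theorem~\ref{theo:vol_acute_orthoc_simpl_in_Klein_model_any_dimension_1-i}; the rest is essentially bookkeeping on the closed-form expression for $s$ in terms of $c$.
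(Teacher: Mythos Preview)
Your proposal is correct and follows essentially the same approach as the paper: specialize Theorem~\ref{theo:vol_acute_orthoc_simpl_in_Klein_model_any_dimension_1-i} to equal parameters $\tau_0=\cdots=\tau_d=\tau$, use Proposition~\ref{prop:side_lengths_hyperbolic_orthocentric_simplex} to relate $s=(d+1)\tau^2$ to $c=\cosh(\ell\sqrt{-\kappa})$, and then verify $(\tau_j/s)^2(\kappa-s)=-\lambda$. The only cosmetic difference is that the paper writes down the formula for $\tau^2$ first and checks it yields side length $\ell$, whereas you solve the side-length equation for $s$; your additional remark that $\kappa>\kappa_0$ for finite $\ell$ is a detail the paper leaves implicit.
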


\begin{proof}
We apply Theorem~\ref{theo:vol_acute_orthoc_simpl_in_Klein_model_any_dimension_1-i} with $\tau_0 = \ldots = \tau_d = \tau>0$, where
$$
\tau^2
:=
\frac{(-\kappa) (1+d \cosh (\ell \sqrt{-\kappa}))}{(d+1)(\cosh(\ell \sqrt{-\kappa})-1)},
\qquad
s
:=
(d+1)\tau^2
=
\frac{(-\kappa) (1+d \cosh (\ell \sqrt{-\kappa} ))}{\cosh(\ell \sqrt{-\kappa})-1}.
$$
By Proposition~\ref{prop:side_lengths_hyperbolic_orthocentric_simplex}, the hyperbolic side length of the simplex $Q$ appearing in Theorem~\ref{theo:vol_acute_orthoc_simpl_in_Klein_model_any_dimension_1-i} is
\begin{equation*}
\frac 1{\sqrt{-\kappa}} \arccosh \left( \frac{(d+1)\tau^2-\kappa}{(d+1)\tau^2+\kappa d} \right) = \ell.
\end{equation*}
So, we can identify $Q$ with the regular hyperbolic simplex $\Delta_\ell^d(\kappa)$.
A computation shows that $\frac{\tau_j}{s} \sqrt {\kappa-s} = \sqrt{-\lambda}$ with  $\lambda = \frac{\cosh(\ell \sqrt{-\kappa})}{1+d \cosh(\ell\sqrt{-\kappa})}$. To complete the proof, apply Theorem~\ref{theo:vol_acute_orthoc_simpl_in_Klein_model_any_dimension_1-i}.
\end{proof}

In the Klein model $\BB^d (\kappa)$ with $\kappa <0$, the ideal regular $d$-dimensional hyperbolic simplex $\Delta_\infty^d(\kappa)$ is represented by a Euclidean regular $d$-dimensional simplex inscribed into a sphere of radius $1/\sqrt{-\kappa}$ in $\R^d$.

\begin{theorem}[Volume of an ideal regular hyperbolic simplex]\label{theo:vol_ideal_regular_hyperbolic_simplex}
Let $d\geq 2$  and  $\kappa  < 0$. 
Then, the hyperbolic volume of the ideal regular $d$-dimensional hyperbolic simplex in $\BB^d (\kappa)$ is given by
\begin{equation}\label{eq:vol_regular_ideal_hyperbolic_simplex}
\Vol_{d,\kappa}(\Delta_\infty^d(\kappa))
=
\frac{\omega_{d+1}}{\ii^d |\kappa|^{d/2}} \frac1{\sqrt{2\pi}} \int_{0}^{(1-\ii)\infty} \left(\Phi^{d+1} \left(\frac{\ii x}{\sqrt d} \right) +  \Phi^{d+1} \left(-\frac{\ii x}{\sqrt d}\right) \right)  \eee^{-\frac{x^2}2} \dd x.
\end{equation}
\end{theorem}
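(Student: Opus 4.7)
The plan is to derive Theorem~\ref{theo:vol_ideal_regular_hyperbolic_simplex} as a direct specialization of the orthocentric formula (Theorem~\ref{theo:vol_acute_orthoc_simpl_in_Klein_model_any_dimension_1-i}) at the boundary curvature $\kappa = \kappa_0$. The idea is to pick equal parameters $\tau_0 = \cdots = \tau_d = \tau$ so that the orthocentric simplex $Q$ of Definition~\ref{def:orthocentric_position} becomes a regular Euclidean simplex inscribed in the boundary sphere of the Klein model, i.e., precisely (a representative of) $\Delta_\infty^d(\kappa)$.

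Concretely, fix $\kappa < 0$ and set $\tau^2 := -d\kappa/(d+1)$, so that $s = (d+1)\tau^2 = -d\kappa$. Lemma~\ref{lemma:lower_limit_kappa_simplex} then gives $\kappa_0 = -(d+1)\tau^2/d = \kappa$, hence $Q \subseteq \bar\BB^d(\kappa)$ and Theorem~\ref{theo:vol_acute_orthoc_simpl_in_Klein_model_any_dimension_1-i} is applicable. From~\eqref{eq:scalar_prod_orthocentric_vectors} one checks $\|v_j\|^2 = -1/s + 1/\tau^2 = -1/\kappa$, so all vertices lie on the boundary sphere $\partial\BB^d(\kappa)$ of radius $1/\sqrt{-\kappa}$; moreover $\|v_j - v_k\|^2 = 2/\tau^2 = 2(d+1)/(-d\kappa)$ is independent of $j \neq k$, so $Q$ is a regular Euclidean simplex inscribed in that sphere. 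By the remark in the introduction that any two such configurations are related by a hyperbolic isometry, $Q$ represents $\Delta_\infty^d(\kappa)$, and therefore its hyperbolic volume equals $\Vol_{d,\kappa}(\Delta_\infty^d(\kappa))$.

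A short calculation with the convention $\sqrt{-r} = \ii \sqrt r$ yields $\frac{\tau_j}{s}\sqrt{\kappa - s} = \frac{\tau}{-d\kappa}\,\ii\sqrt{(d+1)|\kappa|} = \frac{\ii}{\sqrt d}$ for each $j$, so both products in~\eqref{eq:vol_acute_orthoc_simpl_in_Klein_model_even_and_odd_upper} collapse to $\Phi^{d+1}(\pm \ii x/\sqrt d)$, while the prefactor becomes $\omega_{d+1}/(\sqrt\kappa)^d = \omega_{d+1}/(\ii^d |\kappa|^{d/2})$, yielding exactly~\eqref{eq:vol_regular_ideal_hyperbolic_simplex}. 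The only mildly subtle point is that the vertices sit on $\partial\BB^d(\kappa)$ rather than in the open Klein ball, placing us at the boundary of the applicability range of Theorem~\ref{theo:vol_acute_orthoc_simpl_in_Klein_model_any_dimension_1-i}; this is, however, exactly the case $\kappa = \kappa_0$ already admitted by the closed hypothesis $Q \subseteq \bar\BB^d(\kappa)$, so no additional argument is required. An alternative route would be to send $\ell \to \infty$ in~\eqref{eq:vol_regular_simplex}, using $\cosh(\ell\sqrt{-\kappa})/(1+d\cosh(\ell\sqrt{-\kappa})) \to 1/d$; this would require justifying the interchange of the limit with the conditionally convergent contour integral along the direction $1-\ii$, which I regard as the main technical obstacle of that (avoidable) approach.
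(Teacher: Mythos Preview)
Your proposal is correct and follows essentially the same approach as the paper: both apply Theorem~\ref{theo:vol_acute_orthoc_simpl_in_Klein_model_any_dimension_1-i} with the equal parameters $\tau_0=\cdots=\tau_d=\sqrt{-\kappa d}/\sqrt{d+1}$ and $s=-\kappa d$, and then simplify. Your write-up is more detailed (explicitly verifying that the resulting simplex is ideal and regular, and noting that $\kappa=\kappa_0$ is admissible), but the argument is the same.
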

\begin{proof}
 The \emph{Euclidean} side length of $\Delta_\infty^d(\kappa)$  is $\sqrt {2(d+1)}/ \sqrt{-\kappa d}$. Applying Theorem~\ref{theo:vol_acute_orthoc_simpl_in_Klein_model_any_dimension_1-i} with $\tau_0 = \ldots = \tau_d = \sqrt {-\kappa d}/\sqrt{d+1}$ and $s=-\kappa d$ gives the claimed formula.
\end{proof}

\section{Proof of the main result}
\subsection{Outline of the proof}
%
%
The proof of Theorem~\ref{theo:vol_acute_orthoc_simpl_in_Klein_model_any_dimension_1-i} proceeds in the following steps.
\begin{enumerate}[itemsep=0pt,label=(\arabic*)]
\item Show that the right-hand side of the formula
$
\Vol_{d, \kappa}(Q) \;=\; \int_Q \frac{\dd y}{(1+\kappa \| y \|^2)^{(d+1)/2}}
$
is analytic in $\kappa$ on the domain $\CC \bsl (-\infty, \kappa_0]$.
This will be established in Section~\ref{subsec:anal_cont_volume}.

\item Derive a formula for $\Vol_{d, \kappa}(Q)$ that holds for $\kappa \geq s$.
The normal distribution function $\Phi$ naturally appears in this formula through a probabilistic interpretation of the spherical volume.
This step will be carried out in Section~\ref{subsec:spherical_volume_orthocentric_simpl_kappa_geq_s}.

\item Construct an analytic continuation of the expression obtained in the previous step to the domain $\CC \bsl (-\infty,0]$.
This constitutes the most nontrivial step and represents the main contribution of the present work; see Section~\ref{subsec:analytic_continuation_integral_Phi}.

\item Apply the uniqueness theorem for analytic functions to conclude the proof of Theorem~\ref{theo:vol_acute_orthoc_simpl_in_Klein_model_any_dimension_1-i}; see Section~\ref{subsec:proof_formula_hyperbolic_volume_complete}.
\end{enumerate}

\subsection{Analytic continuation of the volume}\label{subsec:anal_cont_volume}
\begin{lemma}\label{lemma:Vol_d_kappa_analytic}
For $d\in \N$, let $A \subseteq \R^d$ be a compact set.
Let
$
\kappa_0 := \inf\{\kappa \in \R: A \subseteq \BB^d(\kappa)\}.
$
(Clearly, $\kappa_0 <0$.) Then, the function
$$
V(\kappa) := \int_A \frac{\dd y}{(1+\kappa \| y \|^2)^{(d+1)/2}}
$$
is well defined and analytic in $\kappa$ on the domain $\CC \backslash (-\infty, \kappa_0]$.
\end{lemma}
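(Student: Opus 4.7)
The plan is a standard holomorphy-under-integral-sign argument, in which the only nontrivial input is verifying that the integrand admits a single-valued holomorphic extension to the stated domain. First I would fix the branch: define $f(\kappa,y) := (1+\kappa\|y\|^2)^{-(d+1)/2}$ via the principal branch of $z^\alpha = \exp(\alpha \log z)$, with $\log$ cut along $(-\infty,0]$, so that $f$ is single-valued and holomorphic in its first slot whenever $1+\kappa\|y\|^2 \notin (-\infty,0]$.

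The core step is then to show that $1+\kappa\|y\|^2 \in \CC\setminus(-\infty,0]$ for every $\kappa \in D := \CC\setminus(-\infty,\kappa_0]$ and every $y\in A$. I would split into three cases: if $y=0$ the value is $1$; if $y\neq 0$ and $\Im\kappa\neq 0$, then the imaginary part $\|y\|^2\,\Im\kappa$ is nonzero, so the value is non-real; and if $\kappa\in\R$ with $\kappa>\kappa_0$, the defining property of $\kappa_0$ gives $A\subseteq\BB^d(\kappa)$, which for $\kappa<0$ means $\|y\|^2<-1/\kappa$ and hence $1+\kappa\|y\|^2>0$ (for $\kappa\geq 0$ positivity is trivial). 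This case analysis is the only genuinely non-routine piece of the proof, since it is what forces the cut to terminate exactly at $\kappa_0$ rather than at some artificial larger value.

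Next I would package things for the standard parameter-integral theorem. For each fixed $y\in A$, the map $\kappa\mapsto f(\kappa,y)$ is a composition of the holomorphic power with the affine map $\kappa\mapsto 1+\kappa\|y\|^2$, hence holomorphic on $D$. Jointly, $f$ is continuous on $D\times A$. Given any $\kappa_\ast\in D$, pick a closed disk $\bar U\subset D$ around $\kappa_\ast$; compactness of $\bar U\times A$ then yields a uniform bound $|f(\kappa,y)|\leq M_{\bar U}$ on $\bar U\times A$. Dominated convergence gives continuity of $V$ on $U$, and for every triangle $\gamma\subset U$ an application of Fubini's theorem (justified by the crude bound $M_{\bar U}\cdot\Vol_d(A)\cdot\Len(\gamma)<\infty$) followed by Cauchy's theorem on the inner integral yields
\begin{equation*}
\oint_\gamma V(\kappa)\,\dd\kappa \;=\; \int_A \oint_\gamma f(\kappa,y)\,\dd\kappa\,\dd y \;=\; 0.
\end{equation*}
Morera's theorem then upgrades $V$ from continuous to holomorphic on $U$, and $\kappa_\ast\in D$ being arbitrary gives holomorphy on all of $D$. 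The main obstacle is the branch-cut verification of the second paragraph; the remainder is the standard playbook for holomorphic parameter integrals and requires only compactness of $A$ and continuity of $f$.
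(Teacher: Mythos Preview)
Your proposal is correct and follows essentially the same route as the paper's proof: verify that $1+\kappa\|y\|^2$ avoids $(-\infty,0]$ for $\kappa\in\CC\setminus(-\infty,\kappa_0]$ and $y\in A$, extract a uniform bound on compact parameter sets, then feed continuity and Fubini--Cauchy into Morera. The only cosmetic differences are that the paper argues the branch-cut avoidance by contrapositive rather than your three-case split, and that it works with arbitrary compact $K\subseteq\CC\setminus(-\infty,\kappa_0]$ rather than localizing to closed disks.
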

\begin{proof}
By Morera's theorem, it suffices to show that $V(\kappa)$ is continuous on $\CC \backslash (-\infty, \kappa_0]$ and that its contour integrals  over all triangular paths contained in $\CC \backslash (-\infty, \kappa_0]$ vanish.

\vspace*{2mm}
\noindent
\emph{Step 1: Preliminaries.}
As a preliminary step, observe that for $y \in A$ and $\kappa \in \CC \backslash (-\infty, \kappa_0]$ we have $1+\kappa \|y\|^2 \notin (-\infty, 0]$. In particular, the function $\kappa \mapsto (1 + \kappa \|y\|^2)^{-(d+1)/2}$ is analytic on $\CC \backslash (-\infty, \kappa_0]$ for every $y\in A$.  Indeed, the contrapositive holds: If, for some $\kappa \in \CC$ and $y \in A$,  we have
$
1+\kappa \|y\|^2 \in (-\infty, 0],
$
then necessarily $\kappa\in \R$, $\kappa <0$, and  $\|y\|^2 \in [-1/\kappa, \infty)$, which implies
$
A \not\subseteq \tfrac{1}{\sqrt{-\kappa}}\, \BB^d = \BB^d(\kappa).
$
By the definition of $\kappa_0$, this yields $\kappa \in (-\infty, \kappa_0]$.

Another observation we need is that for every compact set $K \subseteq \CC \backslash (-\infty, \kappa_0]$ there exists $c_K>0$ such that
$$
\bigl|1+\kappa \|y\|^2\bigr| \ge c_K
\quad \text{for all } \kappa \in K,\; y \in A.
$$
Indeed, the map
$
(\kappa,y) \mapsto 1+\kappa\|y\|^2
$
is continuous on the compact set $K \times A$ and, by the preceding observation, its image avoids $0$.

\vspace*{2mm}
\noindent
\emph{Step 2: Continuity.} To prove that $V(\kappa)$
is continuous on $\CC \backslash (-\infty, \kappa_0]$, take some $\kappa, \kappa_1, \kappa_2, \ldots \in \CC \backslash (-\infty, \kappa_0]$ with $\lim_{n\to\infty}\kappa_n = \kappa$.
By Step~1, there exists $c>0$ such that
$
|1+\kappa_n \|y\|^2|  \ge c
$
for all
$n \in \N$ and  $y \in A$.
Thus, the  dominated convergence theorem applies and yields
$$
\lim_{n \to \infty} \int_A \frac{\dd y}{(1+\kappa_n \|y\|^2)^{(d+1)/2}}
=
\int_A \frac{\dd y}{(1+\kappa \|y\|^2)^{(d+1)/2}},
$$
which  proves the continuity of $V$.

\vspace*{2mm}
\noindent
\emph{Step 3: Contour integrals.}
Let $T \subseteq \CC \backslash (-\infty,\kappa_0]$ be a closed triangle and $\partial T$ be the corresponding (oriented) triangular contour. Our aim is to show that $\oint_{\partial T} V(\kappa) \dd \kappa = 0$.  Let $\gamma: [0,1] \to \partial T$ be a piecewise smooth parametrization of $\partial T$.
As observed in Step 1, for each fixed $y\in A$, the function
$
f_y(\kappa):= (1+\kappa\|y\|^2)^{-(d+1)/2}
$
is analytic on $\CC \backslash (-\infty,\kappa_0]$. By Cauchy's theorem,
$$
\oint_{\partial T} f_y(\kappa) \,\dd \kappa
\eqdef
\int_0^1 f_y(\gamma(t))\gamma'(t) \dd t
=
0.
$$
We now want to integrate this identity over $A$ and interchange the integrals. By Step~1 of the proof, there exists $c>0$ such that $|1+\kappa\|y\|^2|  \geq c$ for all $\kappa\in T,\ y\in A$.
It follows that for every $t\in[0,1]$ (with an exception of a finite set) and every  $y\in A$,
$$
\bigl|f_y(\gamma(t))\gamma'(t)\bigr|
=
|\gamma'(t)| \bigl |1+\kappa\|y\|^2 \bigr|^{-(d+1)/2}
\leq |\gamma'(t)|\,c^{-(d+1)/2} \leq C.
$$
So, the function $(t,y)\mapsto |f_y(\gamma(t))\gamma'(t)|$ is bounded on $[0,1] \times A$. By Fubini's theorem, we interchange the integrals to obtain
$$
\oint_{\partial T} V(\kappa) \dd \kappa = \int_{0}^1 \left(\int_A f_{y}(\gamma(t))\,\dd y\right) \gamma'(t) \dd t
=
\int_A\! \left(\int_{0}^1 f_y(\gamma(t)) \gamma'(t) \, \dd t\right) \dd y
=
\int_A 0 \, \dd y
=
0.
$$
This shows that  the integral of $V(\kappa)$ along any triangular contour in $\CC \backslash (-\infty,\kappa_0]$ vanishes.

\vspace*{2mm}
Thus, Morera's theorem applies and shows that $V(\kappa)$
is analytic on $\CC \backslash (-\infty, \kappa_0]$.
\end{proof}

\subsection{Spherical volume for \texorpdfstring{$\kappa \geq s$}{kappa >= s}} \label{subsec:spherical_volume_orthocentric_simpl_kappa_geq_s}
The upcoming proposition provides a formula for the spherical volume of an orthocentric simplex for $\kappa \geq s$.

\begin{proposition}[Spherical volume for  $\kappa \geq s$]\label{prop:vol_spher_orthocentr_simplex_kappa_greater_s}
Let $d\geq 2$, $\tau_0,\ldots, \tau_d>0$ and put $s=\tau_0^2 + \ldots + \tau_d^2$. Consider vectors $v_0,\ldots, v_d\in  \R^d$ that are in orthocentric position  with parameters $\tau_0,\ldots, \tau_d$; see Definition~\ref{def:orthocentric_position}. Consider the simplex $Q := [v_0, \ldots, v_d]\subseteq \R^d$. Then, for all $\kappa\geq s$ we have
$$
\Vol_{d,\kappa}(Q)
=
\frac{\omega_{d+1}}{\sqrt{\kappa}^{d}} \frac1{\sqrt{2\pi}} \int_{0}^{+\infty} \left( \prod_{j=0}^d \Phi \left( \frac{\tau_j}{s} \sqrt{\kappa -s} x \right) + \prod_{j=0}^d \Phi \left(- \frac{\tau_j}{s} \sqrt{\kappa -s} x \right) \right) \eee^{-\frac{x^2}2} \dd x.
$$
\end{proposition}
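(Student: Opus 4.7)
The plan is to interpret $\Vol_{d,\kappa}(Q)$ as a Gaussian probability on $\R^{d+1}$ and then exploit the orthocentric structure of the vertices $v_0,\ldots,v_d$ to reduce this probability to the explicit one-variable Gaussian integral appearing in the statement. For $\kappa>0$, I would first verify — either from~\eqref{eq:thm_volume} by a direct change of variables or via the standard fact that the radial projection of a standard Gaussian on $\R^{d+1}$ is uniform on $\SS^d$ — the identity
\[
\Vol_{d,\kappa}(Q) \;=\; \frac{\omega_{d+1}}{\kappa^{d/2}}\,\P(Z\in C),
\qquad
C\eqdef \pos\bigl\{(1/\sqrt{\kappa},\,v_0),\ldots,(1/\sqrt{\kappa},\,v_d)\bigr\}\subseteq \R^{d+1},
\]
with $Z=(Z_0,Z_1,\ldots,Z_d)$ a standard Gaussian vector. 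Geometrically, $C\cap \SS^d_+(\kappa)$ is precisely the spherical image of $Q$ under inverse gnomonic projection, and the cone $C$ lies entirely in $\{x_0>0\}$ because all the spanning vectors have first coordinate $1/\sqrt{\kappa}>0$.

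Writing $Z=\sum_{j=0}^d \alpha_j(1/\sqrt{\kappa},v_j)$ with unique coefficients $\alpha_j\in\R$, the event $\{Z\in C\}$ coincides with $\{\alpha_j\geq 0\text{ for every }j\}$. To solve this $(d+1)\times(d+1)$ linear system I would use the two orthocentric identities
\[
\sum_{j=0}^d \tau_j^2\, v_j \;=\; 0
\qquad\text{and}\qquad
\sum_{j=0}^d \tau_j^2\, v_j v_j^\top \;=\; I_d,
\]
both immediate from the isometric identification $v_j\leftrightarrow e_j/\tau_j - H$ highlighted in the Example following Definition~\ref{def:orthocentric_position} (the second identity reduces to the fact that $I_{d+1}-sHH^\top$ is the orthogonal projection onto the span of the $v_j$'s). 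These identities give
\[
\alpha_j \;=\; \tau_j^2\bigl(\lan v_j,Z'\ran + (\sqrt{\kappa}/s)\,Z_0\bigr),
\qquad Z'\eqdef (Z_1,\ldots,Z_d),
\]
so $Z\in C$ if and only if the centred Gaussian variables $X_j\eqdef \lan v_j,Z'\ran + (\sqrt{\kappa}/s)\,Z_0$ are simultaneously non-negative for $j=0,\ldots,d$.

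A direct computation then gives
\[
\Cov(X_j,X_k) \;=\; \lan v_j,v_k\ran + \frac{\kappa}{s^2}
\;=\; \frac{\kappa-s}{s^2} + \frac{\delta_{jk}}{\tau_j^2}.
\]
The crux of the argument — and the only place where the hypothesis $\kappa\geq s$ is used — is in recognising this matrix as the sum of a positive semi-definite rank-one part and a positive definite diagonal part. In that range, $(X_0,\ldots,X_d)$ has the same joint law as $\bigl(Y_j/\tau_j + (\sqrt{\kappa-s}/s)\,Y\bigr)_{j=0,\ldots,d}$ for $Y,Y_0,\ldots,Y_d$ i.i.d.\ standard Gaussian. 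Conditioning on $Y$ and exploiting the independence of the $Y_j$ yields
\[
\P(X_j\geq 0\text{ for all }j) \;=\; \E\Bigl[\prod_{j=0}^d \Phi\Bigl(\frac{\tau_j}{s}\sqrt{\kappa-s}\,Y\Bigr)\Bigr].
\]
Writing this expectation as an integral over $\R$ against $e^{-y^2/2}/\sqrt{2\pi}$ and splitting the range at $y=0$ to symmetrize produces the two product terms in the statement; multiplication by $\omega_{d+1}/\kappa^{d/2}$ then completes the proof.

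I expect the main obstacle to be the covariance decomposition in the last step: everything else is either a standard change of variables (Step~1) or a direct algebraic consequence of Definition~\ref{def:orthocentric_position} (Step~2). The restriction $\kappa\geq s$ enters exactly because the coefficient $\sqrt{\kappa-s}/s$ must be real for this probabilistic representation to exist, which is precisely the restriction that will be removed in Section~\ref{subsec:analytic_continuation_integral_Phi} by analytic continuation of the resulting integral in~$\kappa$.
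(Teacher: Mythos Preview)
Your proposal is correct and lands on exactly the same Gaussian orthant probability as the paper, with the same covariance matrix $\frac{\kappa-s}{s^2}+\frac{\delta_{jk}}{\tau_j^2}$ and the same rank-one-plus-diagonal decomposition at the end. The one genuine difference is how that covariance is reached. The paper passes through conic duality: it identifies the dual cone of $\pos(x_0,\ldots,x_d)$, computes its generators by inverting the Gram matrix $A^\top A$, and then reads off $\langle y_j,y_k\rangle$. You instead solve the primal linear system $Z=\sum_j\alpha_j(1/\sqrt\kappa,v_j)$ directly, using the two orthocentric identities $\sum_j\tau_j^2 v_j=0$ and $\sum_j\tau_j^2 v_j v_j^\top=I_d$ (both of which follow in two lines from~\eqref{eq:scalar_prod_orthocentric_vectors}). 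Your $X_j$ is precisely $-\langle N,y_j\rangle$ in the paper's notation, so the two computations are dual to one another; your route is a bit more elementary since it avoids the matrix inversion and the duality machinery, while the paper's route is the more standard template for this kind of solid-angle calculation. Everything from the covariance onward---the distributional representation via independent Gaussians, conditioning on the common factor, and the symmetrization of the integral---is identical to the paper.
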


\begin{proof}
The proof could be extracted from~\cite{kabluchko_schange_angles_orthocentric_simplices}, but to make the paper self-contained we provide a full argument.
Let $e_0, e_1, \ldots, e_d$ be the standard orthonormal basis of $\R^{d+1}$ and identify the space $\R^d$ in which $Q$ is located with the linear hull of $e_1,\ldots, e_d$. Consider the vectors
$$
x_j = \tfrac{e_0}{\sqrt{\kappa}} +v_j \in \R^{d+1},
\qquad j\in \{0, \ldots, d\},
$$
and let $\pos (x_0,\ldots, x_d)$ be the positive hull of these vectors, i.e.\ the set of linear combinations $\lambda_0 x_0 + \ldots + \lambda_d x_d$ with $\lambda_0\geq 0,\ldots, \lambda_d\geq 0$. Let $N= (g_0,\ldots, g_d)$ be a $(d+1)$-dimensional multivariate standard Gaussian  random vector, that is, a vector whose components $g_0,\ldots, g_d$ are  independent standard Gaussian random variables. Since the distribution of $N$ is invariant w.r.t.\ the orthogonal transformations of $\R^{d+1}$, the random vector $N/\|N\|$ is uniformly distributed on $\SS^d$. It follows that the spherical volume of $Q$ is given by
$$
\Vol_{d, \kappa}(Q)
=
\frac{\omega_{d+1}}{\sqrt{\kappa}^d}
\P\left[\frac N{\|N\|} \in \pos(x_0, \ldots, x_d) \right]
=
\frac{\omega_{d+1}}{\sqrt{\kappa}^d}
\P\left[N \in \pos(x_0, \ldots, x_d) \right]
.
$$

The dual cone of $\pos (x_0,\ldots, x_d)$ is defined as the set of all $y\in \R^{d+1}$ with the property $\langle y, x\rangle \leq 0$ for all $x\in  \pos (x_0,\ldots, x_d)$. Next, our aim is to show that the dual cone of $\pos (x_0,\ldots, x_d)$ can be represented as $\pos (y_0,\ldots, y_d)$ for some vectors $y_0,\ldots, y_d \in \R^{d+1}$ with scalar products
\begin{equation}\label{eq:scalar_prod_y_j_dual_cone}
\lan y_j, y_k \ran
=
\frac{\kappa-s}{s^2} + \frac{\delta_{jk}}{\tau_j^2},
\qquad
j, k \in \{0,\ldots, d\},
\end{equation}
where $\delta_{jk}$ denotes the Kronecker delta.

The $d$-dimensional faces of the cone $\pos(x_0, \ldots, x_d)$ have the form $\pos(x_0, \ldots, x_{j-1}, x_{j+1}, \ldots, x_d)$, for $j\in \{0,\ldots, d\}$. By conic duality, each such $d$-dimensional face corresponds to a $1$-dimensional face of the dual cone, which is a ray spanned by some vector $\widetilde{x}_j$ that is orthogonal to all $x_k$ with $k \neq j$ and satisfies $\lan \widetilde{x}_j, x_j \ran =-1$. Thus, the dual cone of $\pos(x_0, \ldots, x_d)$ is the positive hull of vectors $\widetilde{x}_0, \ldots, \widetilde{x}_d$ with the property that
$$
\lan x_j, \widetilde{x}_k \ran
=
\begin{cases}
0, \quad &j\neq k,\\
-1, \quad &j=k,
\end{cases}
\qquad j,k \in \{0, \ldots, d\}.
$$
Let $A$ be the matrix with $x_0, \ldots, x_d$ as its columns, and let $B$ be the matrix with columns $\widetilde{x}_0, \ldots, \widetilde{x}_d$. The above relations mean that $-B^{\top}A$ is the $(d+1) \times (d+1)$ identity matrix.
The Gram matrix of $\widetilde{x}_0, \ldots, \widetilde{x}_d$, whose entries are $\lan \widetilde{x}_j, \widetilde{x}_k \ran$, is hence given by
$$
B^{\top}B
=
(-A^{-1})(-A^{-1})^{\top}
=
(A^{\top}A)^{-1}.
$$
Using~\eqref{eq:scalar_prod_orthocentric_vectors}, the entries of $A^\top A$ are
\begin{align*}
	\left\lan x_j, x_k \right\ran
	=
	\frac{1}{\kappa} + \lan v_j, v_k \ran
	=
	\frac{1}{\kappa} - \frac{1}{s} + \frac{\delta_{jk}}{\tau_j^2}
	=
	\frac{s - \kappa}{\kappa s} + \frac{\delta_{jk}}{\tau_j^2},
	\qquad \text{ for all } j,k \in \{0, \ldots, d\}.
\end{align*}
 The inverse of the matrix $A^\top A$ has the entries
$$
\lan \widetilde{x}_j, \widetilde{x}_k \ran
=
\frac{\kappa-s}{s^2} \tau_j^2 \tau_k^2  + \delta_{jk} \tau_j^2,
\qquad \text{ for all } j,k \in \{0, \ldots, d\}.
$$
This can be seen by verifying that the product of these matrices is the identity matrix.
Setting $y_j := \widetilde{x}_j/\tau_j^2 \in \R^{d+1}$ for all $j\in \{0, \ldots, d\}$ yields that the dual cone of $\pos (x_0,\ldots, x_d)$ is $\pos (y_0,\ldots, y_d)$, where  $y_0, \ldots, y_d$ satisfy~\eqref{eq:scalar_prod_y_j_dual_cone}.

By the double dual theorem, the dual cone of $\pos (y_0,\ldots, y_d)$ is $\pos (x_0,\ldots, x_d)$. From the definition of the dual cone it  follows that
\begin{align*}
\P\left[N \in \pos(x_0, \ldots, x_d) \right]
&=
\P\left[\lan N,y\ran \leq 0 \quad \text{ for all } y\in \pos(y_0, \ldots, y_d) \right]\\
&=
\P\left[\lan N, y_0 \ran \leq 0,\ldots, \lan N, y_d \ran\leq 0 \right],
\end{align*}
where we recall that $N$ denotes a $(d+1)$-dimensional multivariate standard Gaussian distributed random vector.
The random vector $(\lan N, y_0 \ran, \ldots, \lan N, y_d \ran)^{\top}$, being a linear transformation of $N$,  is again multivariate Gaussian distributed, with mean $0$ and covariance matrix
$$
(\Cov(\lan N, y_j \ran , \lan N, y_k \ran))_{j,k=0}^d
=
(\lan y_j,y_k\rangle)_{j,k=0}^d
=
\Bigl(\frac{\kappa-s}{s^2} + \frac{\delta_{jk}}{\tau_j^2}\Bigr)_{j,k=0}^d.
$$

Let now $\xi, \xi_0, \ldots, \xi_d$ be independent standard normal random variables. Then, we have the following distributional equality of random vectors:
$$
(\lan N, y_0 \ran, \ldots, \lan N, y_d \ran)^\top
\eqdistr
\left( \frac{\xi_0}{\tau_0}-\sqrt{\frac{\kappa-s}{s^2}} \xi, \ldots, \frac{\xi_d}{\tau_d}-\sqrt{\frac{\kappa-s}{s^2}} \xi \right)^\top.
$$
Indeed, both sides are multivariate Gaussian distributed, and their expectations and covariance matrices coincide. Observe that this is the place where we used the assumption $\kappa \geq s$ -- otherwise the square root on the right-hand side is not real. The distributional equality yields
\begin{align*}
\P\left[N \in \pos(x_0, \ldots, x_d) \right]
=&
\P\left[ \lan N, y_0 \ran \leq 0,\ldots, \lan N, y_d \ran\leq 0\right]
\\
=&
\P\left[\frac{\xi_0}{\tau_0}-\sqrt{\frac{\kappa-s}{s^2}} \xi \leq 0, \ldots, \frac{\xi_d}{\tau_d}-\sqrt{\frac{\kappa-s}{s^2}} \xi \leq 0  \right]\\
=&
\P\left[ \xi_0 \leq \frac{\tau_0}s \sqrt{\kappa-s} \, \xi, \ldots, \xi_d \leq \frac{\tau_d}s \sqrt{\kappa-s}\, \xi \right]\\
=&
\frac 1{\sqrt{2\pi}} \int_{-\infty}^{+\infty} \P\left[ \xi_0 \leq \frac{\tau_0}s \sqrt{\kappa-s}  \, x, \ldots, \xi_d \leq \frac{\tau_d}s \sqrt{\kappa-s} \, x \right] \eee^{-x^2/2} \dd x\\
=&
\frac 1{\sqrt{2\pi}} \int_{-\infty}^{+\infty} \prod_{j=0}^d \Phi \left(\frac{\tau_j}s \sqrt{\kappa-s} \,  x \right) \eee^{-x^2/2} \dd x.
\end{align*}
Here, we conditioned on $\{\xi = x\}$ and used independence of $\xi, \xi_0, \ldots, \xi_d$. The claim now follows by splitting the integral on the right-hand side into  integrals over $(-\infty,0]$ and $[0,\infty)$ and applying the substitution $x\mapsto -x$ in the former integral.
\end{proof}

\subsection{Complex asymptotics of the standard normal distribution function}
In this section we recall an asymptotic result for $\Phi(z)$ that will be essential in the sequel. The next lemma can be found in~\cite[Equation~7.12.1]{NIST:DLMF}.  Let $\arg z$ denote the principal value of the argument taking values in $(-\pi,\pi]$.
\begin{lemma}\label{lemma:Phy_asymp}
Fix some $\eps > 0$. The following asymptotics hold as $|z| \to \infty$ provided $z$ stays in the specified sector:
\begin{equation}\label{eq:Phi_asymp}
\Phi(z) =
\begin{cases}
-\frac{1 + o(1)}{\sqrt{2\pi} z} \, \eee^{-z^2/2}, & \text{if } |\arg z| \geq \frac{\pi}{4} + \eps, \\
1 - \frac{1 + o(1)}{\sqrt{2\pi} z} \, \eee^{-z^2/2}, & \text{if } |\arg z| \leq \frac{3\pi}{4} - \eps.
\end{cases}
\end{equation}
\end{lemma}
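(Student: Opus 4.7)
The plan is to establish the second asymptotic in~\eqref{eq:Phi_asymp} via a substitution followed by Watson's lemma, and then deduce the first asymptotic from the reflection identity $\Phi(-z)=1-\Phi(z)$, which follows immediately from~\eqref{eq:compl_std_normal_distr} by the substitution $x\mapsto -x$ in the defining integral.

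For the second asymptotic, I would fix $z$ with $|\arg z|\leq \tfrac{3\pi}{4}-\eps$. The key preparatory observation is that there exists $\eta\in\R$ with $|\eta|<\pi/4$ and $|\arg z+\eta|<\pi/2$; such $\eta$ exists precisely because $|\arg z|<3\pi/4$, and can be chosen so that both strict inequalities hold with slack depending only on $\eps$ (e.g.\ $\eta = -\tfrac{1}{2}\arg z$ in the critical regime $|\arg z|>\pi/2$). Starting from any admissible contour $\gamma$ from $z$ to $\infty$ in
\[
1-\Phi(z) = \frac{1}{\sqrt{2\pi}}\int_\gamma \eee^{-t^2/2}\,\dd t,
\]
I would shift variables via $t=z+u$ and deform to the ray $u\in[0,\infty\eee^{\ii\eta}]$, which is justified by Cauchy's theorem together with a standard estimate of the closing arc at infinity, to obtain
\[
1-\Phi(z) = \frac{\eee^{-z^2/2}}{\sqrt{2\pi}}\int_0^{\infty\eee^{\ii\eta}} \eee^{-zu-u^2/2}\,\dd u.
\]
On the new contour, $\eee^{-u^2/2}$ is uniformly bounded (since $|\eta|<\pi/4$) and $\eee^{-zu}$ decays exponentially (since $|\arg z+\eta|<\pi/2$), so Watson's lemma applied after the expansion $\eee^{-u^2/2}=1-u^2/2+\dots$ yields
\[
\int_0^{\infty\eee^{\ii\eta}} \eee^{-zu-u^2/2}\,\dd u \;=\; \frac{1}{z}\,+\,O(|z|^{-3}),
\]
uniformly in $z$ in the sector, which delivers the second line of~\eqref{eq:Phi_asymp}.

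For the first asymptotic, I would observe that $|\arg z|\geq \tfrac{\pi}{4}+\eps$ is equivalent to $|\arg(-z)|\leq \tfrac{3\pi}{4}-\eps$. Applying the second asymptotic to $-z$ and using $(-z)^2=z^2$ together with the reflection identity gives
\[
\Phi(z)=1-\Phi(-z)= -\frac{1+o(1)}{\sqrt{2\pi}\,z}\,\eee^{-z^2/2},
\]
as required. The main technical obstacle is the uniform validity of Watson's lemma across the entire sector: one must verify that $\eta=\eta(\arg z)$ can be chosen to depend continuously on $z$ so that $|\arg z+\eta|\leq \pi/2-\delta$ for some $\delta=\delta(\eps)>0$, which is what ensures that the implied constants in Watson's lemma depend only on $\eps$ and not on $z$. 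This is the standard saddle-point bookkeeping underlying the classical theory of $\operatorname{erfc}$ asymptotics and is carried out in detail in Chapter~7 of~\cite{NIST:DLMF}.
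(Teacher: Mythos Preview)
The paper does not actually prove this lemma: it cites \cite[Equation~7.12.1]{NIST:DLMF} for the statement and refers to \cite[pp.~109--112]{olver_book_asymptotics} and \cite[Lemma~3.10]{kabluchkoklimovsky} for proofs. Your outline --- the substitution $t=z+u$, rotation of the contour to a ray $u\in[0,\infty\eee^{\ii\eta})$, Watson's lemma for the resulting Laplace-type integral, and then the reflection identity $\Phi(-z)=1-\Phi(z)$ to obtain the first case from the second --- is precisely the standard argument underlying those references, and is correct in structure.

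There is one slip worth fixing. Your illustrative choice $\eta=-\tfrac12\arg z$ in the regime $|\arg z|>\pi/2$ does \emph{not} satisfy $|\eta|<\pi/4$: at $\arg z=3\pi/4-\eps$ it gives $|\eta|=3\pi/8-\eps/2>\pi/4$, and then $\eee^{-u^2/2}$ is unbounded along the ray, so the integral diverges and Watson's lemma does not apply. A choice that works uniformly is, for $\arg z\in[\pi/2-\eps/2,\,3\pi/4-\eps]$, to take $\eta=\pi/2-\eps/2-\arg z$, which yields $|\eta|\le\pi/4-\eps/2$ and $\arg z+\eta=\pi/2-\eps/2$ (symmetrically for negative arguments, and $\eta=0$ when $|\arg z|\le\pi/2-\eps/2$). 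With this correction, both constraints hold with slack $\eps/2$ and the uniformity you need for Watson's lemma follows.
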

A proof, in the more general setting of the incomplete Gamma function $\Gamma(a,z) := \int_z^{\infty} t^{a-1} \eee^{-t} \dd t$ which is related to $\Phi$ via $\Phi(z) = 1 - \Gamma(\frac 12, \frac {z^2}{2})/(2\sqrt \pi)$, can be found in~\cite[pp.~109--112]{olver_book_asymptotics}.
A proof in the generality stated here can be found in~\cite[Lemma~3.10]{kabluchkoklimovsky}.
Note that the sets $|\arg z| \geq \frac{\pi}{4} + \eps$ and $|\arg z| \leq \frac{3\pi}{4} - \eps$ have a non-trivial overlap consisting of two sectors. On this intersection, both cases of~\eqref{eq:Phi_asymp} apply.

\begin{corollary}\label{cor:Phi_convergence_2}
We have
$$
\Phi(z) \to
\begin{cases}
1, & \text{as } |z| \to \infty \text{ while staying in the sector } |\arg z| \leq \frac{\pi}{4}, \\
0, & \text{as } |z| \to \infty \text{ while staying in the sector } |\arg (-z)| \leq \frac{\pi}{4}.
\end{cases}
$$
\end{corollary}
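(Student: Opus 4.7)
The plan is to derive both limits directly from the two cases of Lemma~\ref{lemma:Phy_asymp} (the asymptotic expansion of $\Phi$), observing that in the relevant sectors the error term tends to zero.

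First I would handle the sector $|\arg z|\leq \pi/4$. For any small $\eps>0$, this closed sector is contained in $|\arg z| \le 3\pi/4 - \eps$, so the second case of Lemma~\ref{lemma:Phy_asymp} applies and gives
$$
\Phi(z) \;=\; 1 \;-\; \frac{1+o(1)}{\sqrt{2\pi}\, z}\,\eee^{-z^2/2}.
$$
It thus suffices to show that $\eee^{-z^2/2}/z \to 0$ as $|z|\to\infty$ inside the sector. Writing $z=r\eee^{\ii\theta}$ with $|\theta|\leq \pi/4$, we have $\Re(z^2) = r^2 \cos(2\theta)\geq 0$, hence $|\eee^{-z^2/2}|=\eee^{-\Re(z^2)/2}\leq 1$, while $|1/z|=1/r\to 0$. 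Combining these gives $\Phi(z)\to 1$.

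For the second assertion, with $|\arg(-z)|\leq \pi/4$, I would either use the symmetry $\Phi(z)+\Phi(-z)=1$ (an immediate consequence of~\eqref{eq:compl_std_normal_distr} by substituting $x\mapsto -x$) to reduce to the previously established case, or argue directly. Directly: $|\arg(-z)|\leq \pi/4$ means $|\arg z|$ is within $\pi/4$ of $\pi$, so $|\arg z|\geq 3\pi/4 > \pi/4+\eps$ for small $\eps>0$, and the first case of Lemma~\ref{lemma:Phy_asymp} applies, giving $\Phi(z) = -(1+o(1))\,\eee^{-z^2/2}/(\sqrt{2\pi}\,z)$. Since $z^2=(-z)^2$ and $|\arg(-z)|\leq \pi/4$, again $\Re(z^2)\geq 0$, whence $|\eee^{-z^2/2}/z|\leq 1/|z|\to 0$, and $\Phi(z)\to 0$.

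There is essentially no obstacle here; the corollary is a direct unpacking of Lemma~\ref{lemma:Phy_asymp}. The only mild subtlety is that the corollary asserts convergence on the \emph{closed} sectors $|\arg z|\leq \pi/4$ and $|\arg(-z)|\leq \pi/4$, whereas the lemma's hypotheses require a strict opening $\pm\eps$. This is resolved because (i) the sector $|\arg z|\leq \pi/4$ still fits inside $|\arg z|\leq 3\pi/4-\eps$ (and likewise for the other case), and (ii) on the boundary rays $\arg z=\pm\pi/4$ the real part $\Re(z^2)$ vanishes so that $|\eee^{-z^2/2}|=1$, but the factor $1/|z|$ still drives the error term to zero.
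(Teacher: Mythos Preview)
Your proof is correct and follows essentially the same approach as the paper: apply the second case of Lemma~\ref{lemma:Phy_asymp} on the sector $|\arg z|\leq \pi/4$, bound $|\eee^{-z^2/2}|\leq 1$ via $\Re(z^2)\geq 0$, and let the $1/|z|$ factor kill the error term; then reduce the second sector to the first via $\Phi(-z)=1-\Phi(z)$. The paper uses only the symmetry reduction for the second case, while you additionally offer a direct argument via the first case of the lemma, but otherwise the arguments coincide.
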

\begin{proof}
We consider only the case when $|\arg z| \leq \tfrac \pi4$ since the other case can be reduced to this one by the identity $\Phi(-z) = 1-\Phi(z)$. For $|\arg z| \leq \tfrac \pi4$, the second case of~\eqref{eq:Phi_asymp} gives
$$
\big| 1- \Phi(z) \big|
=
\frac{1+o(1)}{\sqrt{2\pi} \, |z|} \big| \eee^{-z^2/2} \big|
=
\frac{1+o(1)}{\sqrt{2\pi} \, |z|} \eee^{-\Re(z^2)/2},
\qquad \text{ as } |z|\to +\infty.
$$
Next observe that $|\arg z| \leq \tfrac \pi4$ implies  $\Re(z^2) \geq 0$ and hence $\eee^{-\Re(z^2)/2}\leq 1$. Due to the presence of the term $1/|z|$, it follows that $1-\Phi(z) \to 0$.
\end{proof}

\begin{corollary}\label{cor:Phi_convergence_3}
The function $\Phi(z)$ stays bounded on the sectors $\{z\in \CC:  |\arg z| \leq \frac \pi 4\}$ and $\{z\in \CC:  |\arg (-z)| \leq \frac \pi 4\}$.
\end{corollary}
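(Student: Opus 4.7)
The plan is to combine the asymptotic behaviour obtained in Corollary~\ref{cor:Phi_convergence_2} with the continuity of $\Phi$ on a compact set. The argument is symmetric for the two sectors, so I would treat first the sector $S_+:=\{z\in\CC: |\arg z|\leq \pi/4\}$ (with the convention that $0\in S_+$) and deduce the other case from the functional identity $\Phi(-z)=1-\Phi(z)$.

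First, I would invoke Corollary~\ref{cor:Phi_convergence_2}: since $\Phi(z)\to 1$ as $|z|\to\infty$ with $z\in S_+$, there exists $R>0$ such that $|\Phi(z)-1|\leq 1$, and hence $|\Phi(z)|\leq 2$, for every $z\in S_+$ with $|z|\geq R$. Next, since $\Phi$ is entire, it is in particular continuous, and therefore bounded on the compact set $K_R:=\{z\in S_+:|z|\leq R\}$; denote this bound by $M_R:=\max_{z\in K_R}|\Phi(z)|<\infty$. Taking $M:=\max(2,M_R)$ yields $|\Phi(z)|\leq M$ for all $z\in S_+$, which is the desired boundedness on $S_+$.

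For the other sector $S_-:=\{z\in\CC: |\arg(-z)|\leq \pi/4\}$, I would simply observe that $z\in S_-$ if and only if $-z\in S_+$, and use $\Phi(z)=1-\Phi(-z)$ together with the bound already established on $S_+$ to conclude $|\Phi(z)|\leq 1+M$ on $S_-$. (Alternatively, one can repeat the previous paragraph verbatim, this time using the second case of Corollary~\ref{cor:Phi_convergence_2}.)

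There is essentially no obstacle here; the corollary is a packaging statement that converts the pointwise limits of Corollary~\ref{cor:Phi_convergence_2} into a uniform bound on each closed sector. The only minor care needed is to separate the behaviour at infinity (controlled by the asymptotic expansion~\eqref{eq:Phi_asymp}) from the behaviour on a bounded region (controlled by continuity of the entire function $\Phi$); no estimate on the error term in~\eqref{eq:Phi_asymp} beyond what is already used in Corollary~\ref{cor:Phi_convergence_2} is required.
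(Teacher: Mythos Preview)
Your proof is correct and follows exactly the same approach as the paper: the paper's proof is the single sentence ``Follows from the continuity of $\Phi(z)$ combined with Corollary~\ref{cor:Phi_convergence_2}'', and your argument is precisely the standard unpacking of that sentence.
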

\begin{proof}
Follows from the continuity of $\Phi(z)$ combined with Corollary~\ref{cor:Phi_convergence_2}.
\end{proof}

\subsection{Analytic continuation of an integral}\label{subsec:analytic_continuation_integral_Phi}

In this section, we study improper integrals of the form $\int_{0}^{+\infty} \prod_{j=0}^d \Phi (\lambda_j \omega y )\cdot  \eee^{-(\omega y)^2/2} \omega \dd y$. We begin with a lemma proved by partial integration.

\begin{lemma}\label{lemma:integral_identity_for_angle_analytic}
Let $d \in \N$ and  $\lambda_0, \ldots, \lambda_d \in \CC\backslash\{\pm \ii\}$. Then, for every  $0 < A < B < \infty$ and $\omega \in \CC \backslash\{0\}$ we have
\begin{align}
\int_{A}^{B} \prod_{j=0}^d \Phi\left(\lambda_j \omega y \right)\, \cdot\,  & \eee^{-(\omega y)^2/2} \omega \dd y   \label{eq:integral_identity_for_angle_analytic} \\
\notag
=&
-\frac {1}{\omega} \Bigg( \prod_{j=0}^d \Phi\left(\lambda_j \omega \sqrt{x} \right) \cdot \frac{1}{\sqrt x} \eee^{-\omega^2x/2} \Bigg) \Bigg|_{A^2}^{B^2}
\\
\notag
&-
\frac 1{2\omega} \int_{A^2}^{B^2} \prod_{j=0}^d \Phi\left(\lambda_j\omega \sqrt{x} \right) \cdot \frac {1}{x^{3/2}} \eee^{-\omega^2 x/2}  \dd x\\ \notag
&-
\sum_{\ell=0}^d \frac{\lambda_\ell}{\sqrt{2\pi} \omega^2  (1+\lambda_\ell^2)}
\Bigg(\prod_{\substack{j=0 \\ j\neq \ell}}^d \Phi\left(\lambda_j \omega \sqrt{x} \right) \cdot \frac{1}{x} \eee^{-\frac{\omega^2 x}{2}\left(1+\lambda_\ell^2\right)}  \Bigg) \Bigg |_{A^2}^{B^2}\\ \notag
&-\sum_{\ell=0}^d \frac{\lambda_\ell}{\sqrt{2\pi} \omega^2 (1+\lambda_\ell^2)} \int_{A^2}^{B^2} \prod_{\substack{j=0 \\ j\neq \ell}}^d \Phi\left(\lambda_j \omega \sqrt{x} \right) \cdot \frac{1}{x^2} \eee^{-\frac{\omega^2 x}{2}(1+\lambda_\ell^2)} \dd x\\ \notag
&+ \sum_{0\leq \ell_1 \neq \ell_2 \leq d} \frac{\lambda_{\ell_1} \lambda_{\ell_2}}{4\pi \omega (1+ \lambda_{\ell_1}^2)} \int_{A^2}^{B^2} \prod_{\substack{j=0 \\ j\notin \{ \ell_1, \ell_2 \}}}^d \Phi\left(\lambda_j \omega \sqrt{x} \right) \cdot \frac{1}{x^{3/2}} \eee^{-\frac{\omega^2 x}{2}\left(1+ \lambda_{\ell_1}^2 + \lambda_{\ell_2}^2\right)} \dd x.
\end{align}
\end{lemma}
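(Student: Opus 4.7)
The identity is purely computational: it is what two successive integrations by parts produce after the natural change of variables. Here is how I would organize the calculation.

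\textbf{Step 1: Change of variables.} I would first substitute $x = y^2$, $dy = dx/(2\sqrt x)$, turning the left-hand side into
\begin{equation*}
\int_{A^2}^{B^2} \prod_{j=0}^d \Phi\bigl(\lambda_j\omega\sqrt x\bigr)\cdot \frac{\omega}{2\sqrt x}\, \eee^{-\omega^2 x/2}\dd x.
\end{equation*}
The motivation is that the factor $(\omega/2)\eee^{-\omega^2 x/2}\dd x = -\dd(\tfrac 1\omega \eee^{-\omega^2 x/2})$ is an exact differential, making it a natural candidate for $\dd v$ in integration by parts.

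\textbf{Step 2: First integration by parts.} I would apply integration by parts with
$u = \frac{1}{\sqrt x}\prod_{j=0}^d \Phi(\lambda_j\omega\sqrt x)$ and $\dd v = \tfrac{\omega}{2}\eee^{-\omega^2 x/2}\dd x$, so that $v = -\tfrac 1\omega \eee^{-\omega^2 x/2}$. Using $\Phi'(z)=\tfrac{1}{\sqrt{2\pi}}\eee^{-z^2/2}$ and the chain rule,
\begin{equation*}
\dd u = \Biggl(-\frac{1}{2 x^{3/2}}\prod_{j=0}^d\Phi(\lambda_j\omega\sqrt x) + \sum_{\ell=0}^d \frac{\lambda_\ell\omega}{2\sqrt{2\pi}\, x}\prod_{\substack{j=0\\ j\neq \ell}}^d\Phi(\lambda_j\omega\sqrt x)\cdot \eee^{-\lambda_\ell^2\omega^2 x/2}\Biggr)\dd x.
\end{equation*}
The boundary term $[uv]_{A^2}^{B^2}$ yields the first line on the right of~\eqref{eq:integral_identity_for_angle_analytic}, while $-\int v\,\dd u$ produces the $\int \prod_j \Phi\cdot x^{-3/2}\eee^{-\omega^2 x/2}\dd x$ term, and a residual sum of integrals
\begin{equation*}
I_\ell := \int_{A^2}^{B^2} \prod_{\substack{j=0\\ j\neq \ell}}^d \Phi(\lambda_j\omega\sqrt x)\cdot\frac{1}{x}\,\eee^{-\omega^2 x(1+\lambda_\ell^2)/2}\,\dd x,
\end{equation*}
each carrying the prefactor $\frac{\lambda_\ell}{2\sqrt{2\pi}}$.

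\textbf{Step 3: Second integration by parts.} For each $\ell$, I would integrate $I_\ell$ by parts once more, now with $\dd v = \eee^{-\omega^2 x(1+\lambda_\ell^2)/2}\dd x$, whose antiderivative $v = -\frac{2}{\omega^2(1+\lambda_\ell^2)}\eee^{-\omega^2 x(1+\lambda_\ell^2)/2}$ is well defined precisely because $\lambda_\ell \neq \pm\ii$. Differentiating $u = x^{-1}\prod_{j\neq \ell}\Phi(\lambda_j\omega\sqrt x)$ produces (i) a $-x^{-2}\prod_{j\neq \ell}\Phi$ term and (ii) a double sum over second indices $\ell_2\neq \ell$ that combines an extra Gaussian factor $\eee^{-\lambda_{\ell_2}^2\omega^2 x/2}$ with the existing one, giving the exponent $-\tfrac{\omega^2 x}{2}(1+\lambda_{\ell_1}^2+\lambda_{\ell_2}^2)$ of the final term. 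Collecting the boundary term from this second IBP gives the third line of~\eqref{eq:integral_identity_for_angle_analytic}, and the two pieces of $-\int v\,\dd u$, summed against the prefactor $\frac{\lambda_\ell}{2\sqrt{2\pi}}$ and with $\ell$ renamed to $\ell_1$, yield exactly the fourth and fifth lines on the right-hand side.

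\textbf{What I expect to be the main obstacle.} There is no analytic subtlety here — the integration is over a bounded real interval $[A^2,B^2]$, the integrands are continuous, and no convergence issue arises (those will appear only later, when passing $B\to\infty$). The only delicate point is the bookkeeping: one must keep track of the signs, of which index ranges (single sums over $\ell$, double sums over $\ell_1\neq\ell_2$) arise from which boundary vs.\ bulk contributions, and of the non-symmetric factor $(1+\lambda_{\ell_1}^2)^{-1}$ in the fifth term, which reflects the asymmetry between the ``outer'' index $\ell_1$ (used in $\dd v$ of the second IBP) and the ``inner'' index $\ell_2$ (coming from $\dd u$). The hypothesis $\lambda_j \neq \pm \ii$ is used exactly to justify dividing by $1+\lambda_\ell^2$ in Step~3.
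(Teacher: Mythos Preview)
Your proposal is correct and matches the paper's proof essentially line for line: the paper also substitutes $x=y^2$, integrates by parts with $u=\tfrac{1}{\sqrt x}\prod_j\Phi(\lambda_j\omega\sqrt x)$ and $\dd v=\tfrac{\omega}{2}\eee^{-\omega^2 x/2}\dd x$, and then integrates the residual $I_\ell$'s by parts once more against $\dd v=\eee^{-\omega^2 x(1+\lambda_\ell^2)/2}\dd x$. Your remarks about the bookkeeping and the role of $\lambda_\ell\neq\pm\ii$ are also exactly what the paper relies on.
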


\begin{proof}
Applying the substitution $y^2 = x$ to the left-hand side of~\eqref{eq:integral_identity_for_angle_analytic}, followed by integration by parts, yields
\begin{align*}
\int_A^B \prod_{j=0}^d \Phi\left(\lambda_j\omega y \right) \,\cdot\, & \eee^{-(\omega y)^2/2} \omega \dd y
\\
=&
\int_{A^2}^{B^2} \prod_{j=0}^d \Phi\left(\lambda_j \omega \sqrt x \right) \cdot \eee^{-\omega^2 x/2}  \frac{\omega \dd x}{2 \sqrt x}\\
=&
-\frac 1 \omega \int_{A^2}^{B^2} \prod_{j=0}^d \Phi\left(\lambda_j \omega \sqrt x \right) \cdot \frac{1}{\sqrt x} \cdot  \dd \eee^{-\omega^2 x/2} \\
=&
-\frac 1{\omega} \Bigg( \prod_{j=0}^d \Phi\left(\lambda_j \omega \sqrt x \right) \cdot \frac{1}{\sqrt x} \eee^{-\omega^2 x/2} \Bigg) \Bigg|_{A^2}^{B^2}
\\
&+ \frac 1{\omega} \cdot \left(-\frac{1}{2}\right) \int_{A^2}^{B^2} \prod_{j=0}^d \Phi\left(\lambda_j \omega \sqrt x \right) \cdot \frac{1}{x^{3/2}} \eee^{-\omega^2 x/2} \dd x\\
&+
\frac 1{\omega} \sum_{\ell = 0}^d \int_{A^2}^{B^2} \prod_{\substack{j=0 \\ j\neq \ell}}^d \Phi\left(\lambda_j \omega \sqrt x \right) \cdot \frac 1{\sqrt{2 \pi}} \eee^{-\frac{\omega^2 x}{2} \lambda_\ell^2} \lambda_\ell
\omega \cdot  \frac 1{2\sqrt x} \cdot  \frac{1}{\sqrt x}  \cdot  \eee^{-\omega^2 x/2} \dd x.
\end{align*}
The first two summands on the right-hand side agree with those on the right-hand side of~\eqref{eq:integral_identity_for_angle_analytic}. The final term can be rewritten to
\begin{align*}
\sum_{\ell = 0}^d \frac{\lambda_\ell}{2 \sqrt{2 \pi}} \int_{A^2}^{B^2} \prod_{\substack{j=0 \\ j\neq \ell}}^d \Phi\left(\lambda_j \omega \sqrt x \right) \cdot & \frac{1}{x} \eee^{-\frac {\omega^2 x}2 \left(1+ \lambda_\ell^2\right)} \dd x\\
=&
- \sum_{\ell=0}^d \frac{\lambda_{\ell}}{\sqrt{2\pi} \omega^2  (1+ \lambda_\ell^2 )} \int_{A^2}^{B^2} \prod_{\substack{j=0 \\ j\neq \ell}}^d \Phi\left(\lambda_j \omega \sqrt x \right)  \cdot \frac 1x \cdot   \dd \eee^{-\frac {\omega^2 x}2 \left(1+\lambda_\ell^2\right)}.
\end{align*}
Note that $1+\lambda_\ell^2 \neq 0$ due to the assumption $\lambda_\ell \neq \pm \ii$. Integration by parts shows that this is equal to the sum of the final three terms on the right-hand side of~\eqref{eq:integral_identity_for_angle_analytic}.
\end{proof}

\begin{proposition}\label{prop:integral_on_upper_halfplane}
Fix $d\in \N$ and $\mu_0, \ldots, \mu_d \in \R\backslash\{0\}$. Consider the set
$$
S_+:= \{z\in \CC: \Im z \geq 0\} \backslash \{-\mu_0^{-2},\ldots, -\mu_d^{-2}\}.
$$
\begin{itemize}
\item[(a)] Let $\omega = 1-\ii$. Then, for every  $z \in S_+$ and  every $A>0$, we have
\begin{align}
\int_0^{\omega \infty} & \prod_{j=0}^d   \Phi\left(\mu_j \sqrt{z}  x \right) \cdot \eee^{-\frac12 {x^2}}  \dd x
\eqdef
\lim_{B \to +\infty} \int_0^{B} \prod_{j=0}^d \Phi\left(\mu_j \sqrt{z}  \omega y \right) \cdot \eee^{-\frac12 {\omega^2y^2}} \omega \dd y
\label{eq:anal_cont_int_improper_int}\\
=&
\int_0^{A} \prod_{j=0}^d \Phi\left(\mu_j \sqrt{z}  \omega y \right) \cdot \eee^{-\frac12 {\omega^2y^2}} \omega \dd y
\label{eq:anal_cont_int_term_0}
\\
&+\frac {1}{A \omega}  \prod_{j=0}^d \Phi\left(\mu_j \sqrt z \omega A \right) \cdot \eee^{-\omega^2 A^2/2}
\label{eq:anal_cont_int_term_1}
\\
&-
\frac 1{2\omega} \int_{A^2}^{+\infty} \prod_{j=0}^d \Phi\left(\mu_j \sqrt z\omega \sqrt{x} \right) \cdot \frac {1}{x^{3/2}} \eee^{-\omega^2 x/2}  \dd x
\label{eq:anal_cont_int_term_2}\\
&+
\sum_{\ell=0}^d \frac{\mu_\ell \sqrt z}{\sqrt{2\pi} \omega^2  (1+\mu_\ell^2 z)}
\prod_{\substack{j=0 \\ j\neq \ell}}^d \Phi\left(\mu_j  \sqrt z \omega A \right) \cdot \frac{1}{A^2} \eee^{-\frac{\omega^2 A^2}{2}\left(1+\mu_\ell^2 z\right)}
\label{eq:anal_cont_int_term_3}
\\
&-\sum_{\ell=0}^d \frac{\mu_\ell \sqrt z}{\sqrt{2\pi} \omega^2 (1+\mu_\ell^2 z)} \int_{A^2}^{+\infty} \prod_{\substack{j=0 \\ j\neq \ell}}^d \Phi\left(\mu_j \sqrt z  \omega \sqrt{x} \right) \cdot \frac{1}{x^2} \eee^{-\frac{\omega^2 x}{2}(1+\mu_\ell^2z)} \dd x
\label{eq:anal_cont_int_term_4}
\\
&+ \sum_{0\leq \ell_1 \neq \ell_2 \leq d} \frac{\mu_{\ell_1} \mu_{\ell_2} z}{4\pi \omega (1+ \mu_{\ell_1}^2 z)} \int_{A^2}^{+\infty} \prod_{\substack{j=0 \\ j\notin \{ \ell_1, \ell_2 \}}}^d \Phi\left(\mu_j \sqrt z  \omega \sqrt{x} \right) \cdot \frac{1}{x^{3/2}} \eee^{-\frac{\omega^2 x}{2}\left(1+ \mu_{\ell_1}^2 z + \mu_{\ell_2}^2 z\right)} \dd x,
\label{eq:anal_cont_int_term_5}
\end{align}
where the square root is defined by the convention $\sqrt{r \eee^{\ii\theta}} = \sqrt r \eee^{\ii \theta/2}$ for $r\geq 0$ and $\theta \in [0,\pi]$.
In particular, the limit in~\eqref{eq:anal_cont_int_improper_int} exists and is finite. All the integrals in~\eqref{eq:anal_cont_int_term_2}, \eqref{eq:anal_cont_int_term_4}, \eqref{eq:anal_cont_int_term_5}  converge absolutely.
\item[(b)] Equations~\eqref{eq:anal_cont_int_improper_int}--\eqref{eq:anal_cont_int_term_5}  define a function of  $z$ which is continuous on the set $S_+$ and analytic on the open upper half-plane $\{z\in \CC: \Im z >0\}$.
\end{itemize}
\end{proposition}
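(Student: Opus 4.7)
The plan is to apply Lemma \ref{lemma:integral_identity_for_angle_analytic} with $\lambda_j = \mu_j \sqrt{z}$ on a finite interval $[A,B]$ (after the substitution $x = \omega y$), and then to pass to the limit $B \to \infty$. The assumption $z \in S_+$ translates exactly into $\lambda_j \neq \pm\ii$ for every $j$, since $\lambda_j = \pm\ii$ would force $z = -\mu_j^{-2}$, so the hypothesis of the lemma is satisfied. This produces an exact identity with $A^2$ and $B^2$ as the inner integration limits, and one must then control every piece in the limit.

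The crux of the matter is that with $\omega = 1-\ii$ one has $\omega^2 = -2\ii$, so $|\eee^{-\omega^2 x/2}| = |\eee^{\ii x}| = 1$ has no decay as $x \to \infty$. Thus the improper integral on the left-hand side of \eqref{eq:anal_cont_int_improper_int} is at best conditionally convergent, and the integration by parts is precisely what produces the $x^{-3/2}$ and $x^{-2}$ weights needed for absolute convergence of the tail integrals. To control the $\Phi$-factors along the ray $\omega y$, I would observe that for $z$ in the closed upper half-plane $\arg\sqrt{z} \in [0,\pi/2]$, and hence $\arg(\sqrt{z}\,\omega) \in [-\pi/4,\pi/4]$. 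Since $\mu_j \in \R\setminus\{0\}$, the point $\mu_j \sqrt{z}\,\omega\sqrt{x}$ lies either in the sector $\{|\arg w|\leq \pi/4\}$ (if $\mu_j>0$) or in $\{|\arg(-w)|\leq \pi/4\}$ (if $\mu_j<0$), and Corollary \ref{cor:Phi_convergence_3} yields a bound on $\Phi(\mu_j\sqrt{z}\,\omega\sqrt{x})$ that is uniform in $x\geq 0$.

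For the terms carrying the modified exponential $\eee^{-\omega^2(1+\mu_\ell^2 z)x/2}$, a short computation gives $\Re\bigl(\omega^2(1+\mu_\ell^2 z)\bigr) = 2\mu_\ell^2 \Im z \geq 0$, so this factor is bounded by $1$, and the analogous calculation for the two-index term \eqref{eq:anal_cont_int_term_5} yields $|\eee^{-\omega^2(1+\mu_{\ell_1}^2 z + \mu_{\ell_2}^2 z)x/2}| \leq 1$. Combined with the polynomial weights $x^{-3/2}$ or $x^{-2}$, this establishes absolute convergence of all three tail integrals \eqref{eq:anal_cont_int_term_2}, \eqref{eq:anal_cont_int_term_4}, \eqref{eq:anal_cont_int_term_5}. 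The boundary contributions at $B$ carry factors $1/B$ or $1/B^2$ multiplied by bounded quantities and therefore vanish as $B\to\infty$. Passing to that limit in the IBP identity proves part (a) and, as a byproduct, shows that the limit on the left-hand side of \eqref{eq:anal_cont_int_improper_int} exists.

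For part (b), the pieces \eqref{eq:anal_cont_int_term_0}, \eqref{eq:anal_cont_int_term_1}, \eqref{eq:anal_cont_int_term_3} are manifestly continuous in $z$ on $S_+$ (the condition $z \neq -\mu_\ell^{-2}$ being needed exactly to keep $1+\mu_\ell^2 z$ nonvanishing), while the three tail integrals are continuous by dominated convergence, since the bounds above are uniform on compact subsets of $S_+$. Analyticity on the open upper half-plane follows by Morera's theorem applied termwise: each integrand depends holomorphically on $z$ (with $\sqrt{z}$ single-valued there), so closed-triangle integrals in $z$ may be exchanged with the $x$-integration via Fubini and then vanish by Cauchy's theorem applied to the holomorphic integrand, exactly as in the proof of Lemma \ref{lemma:Vol_d_kappa_analytic}. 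The main delicate step throughout is the sector analysis for $\Phi$ along the ray $\omega y$; once that is in place, the rest is bookkeeping.
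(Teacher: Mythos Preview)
Your proposal is correct and follows essentially the same approach as the paper: split off $\int_0^A$, apply Lemma~\ref{lemma:integral_identity_for_angle_analytic} with $\lambda_j=\mu_j\sqrt z$ on $[A,B]$, use the sector analysis $\arg(\sqrt z\,\omega)\in[-\pi/4,\pi/4]$ together with Corollary~\ref{cor:Phi_convergence_3} and the computation $\Re(\omega^2(1+\mu_\ell^2 z))=2\mu_\ell^2\Im z\ge 0$ to bound all $\Phi$- and exponential factors, and then let $B\to\infty$. The only cosmetic difference is that for analyticity in~(b) the paper invokes a differentiation-under-the-integral-sign lemma, whereas you use Morera plus Fubini; both are standard and yield the same conclusion.
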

\begin{proof}
\emph{Proof of (a).}
Split the integral $\int_0^B$ into the two parts,  $\int_0^A$ and $\int_A^B$. The first integral exists since the integrand is continuous on $[0,A]$, and gives~\eqref{eq:anal_cont_int_term_0}.   For the second integral, we  apply Lemma~\ref{lemma:integral_identity_for_angle_analytic} with $\omega = 1-\ii$ and $\lambda_j = \mu_j \sqrt z$:
\begin{align}
\int_{A}^{B} & \prod_{j=0}^d \Phi\left(\mu_j  \sqrt z\omega y \right)\, \cdot\,   \eee^{-(\omega y)^2/2} \omega \dd y   \label{eq:integral_identity_for_angle_analytic_repeat} \\
=&
-\frac {1}{\omega} \Bigg( \prod_{j=0}^d \Phi\left(\mu_j \sqrt z \omega \sqrt{x} \right) \cdot \frac{1}{\sqrt x} \eee^{-\omega^2x/2} \Bigg) \Bigg|_{x=A^2}^{x=B^2}
\label{eq:anal_cont_int_term_1_proof}
\\
&-
\frac 1{2\omega} \int_{A^2}^{B^2} \prod_{j=0}^d \Phi\left(\mu_j \sqrt z\omega \sqrt{x} \right) \cdot \frac {1}{x^{3/2}} \eee^{-\omega^2 x/2}  \dd x
\label{eq:anal_cont_int_term_2_proof}
\\
&-
\sum_{\ell=0}^d \frac{\mu_\ell \sqrt z}{\sqrt{2\pi} \omega^2  (1+\mu_\ell^2 z)}
\Bigg(\prod_{\substack{j=0 \\ j\neq \ell}}^d \Phi\left(\mu_j  \sqrt z \omega \sqrt{x} \right) \cdot \frac{1}{x} \eee^{-\frac{\omega^2 x}{2}\left(1+\mu_\ell^2 z\right)}  \Bigg) \Bigg |_{x=A^2}^{x=B^2}
\label{eq:anal_cont_int_term_3_proof}
\\
&-\sum_{\ell=0}^d \frac{\mu_\ell \sqrt z}{\sqrt{2\pi} \omega^2 (1+\mu_\ell^2 z)} \int_{A^2}^{B^2} \prod_{\substack{j=0 \\ j\neq \ell}}^d \Phi\left(\mu_j \sqrt z  \omega \sqrt{x} \right) \cdot \frac{1}{x^2} \eee^{-\frac{\omega^2 x}{2}(1+\mu_\ell^2z)} \dd x
\label{eq:anal_cont_int_term_4_proof}
\\
&+ \sum_{0\leq \ell_1 \neq \ell_2 \leq d} \frac{\mu_{\ell_1} \mu_{\ell_2} z}{4\pi \omega (1+ \mu_{\ell_1}^2 z)} \int_{A^2}^{B^2} \prod_{\substack{j=0 \\ j\notin \{ \ell_1, \ell_2 \}}}^d \Phi\left(\mu_j \sqrt z  \omega \sqrt{x} \right) \cdot \frac{1}{x^{3/2}} \eee^{-\frac{\omega^2 x}{2}\left(1+ \mu_{\ell_1}^2 z + \mu_{\ell_2}^2 z\right)} \dd x.
\label{eq:anal_cont_int_term_5_proof}
\end{align}
The assumption $z \neq  -\mu_j^{-2}$, $j=0,\ldots, d$, entails that all denominators appearing on the right-hand side are non-zero.

Let us analyze the terms on the right-hand side. First of all, observe that $\omega^2 = (1-\ii)^2 = -2\ii$. It follows that for all $x\geq A$ and $z\in \CC$ such that $\Im z \geq 0$, and for all possible choices of indices $\ell, \ell_1,\ell_2$,  we have
$$
|\eee^{-\omega^2x/2}| = 1,
\qquad
|\eee^{-\frac{\omega^2 x}{2}(1+\mu_\ell^2z)}|\leq 1
\qquad
\left|\eee^{-\frac{\omega^2 x}{2}\left(1+ \mu_{\ell_1}^2 z + \mu_{\ell_2}^2 z\right)}\right|\leq 1.
$$
Next, observe that for $z\in \CC$ satisfying $\Im z \geq 0$ we have $\sqrt z \omega \sqrt{x} \in T$, where $T:=\{u\in \CC:  |\arg u| \leq \frac \pi 4\}$.
Recall from Corollary~\ref{cor:Phi_convergence_3} that  $|\Phi(u)|\leq C$ and $\Phi(-u) \leq C$ for all  $u\in T$ and some absolute constant $C>0$. It follows that, regardless of the sign of $\mu_j  \in \R\backslash\{0\}$, we have
$$
\Phi\left(\mu_j \sqrt z \omega \sqrt{x} \right) \leq C,
$$
for all $z\in \CC$ such that $\Im z\geq 0$, all  $x>0$  and $j\in \{0, \ldots, d\}$.

To summarize: all terms of the form $\Phi(\ldots)$ or $\exp\{\ldots\}$ that appear in~\eqref{eq:anal_cont_int_term_1_proof}--\eqref{eq:anal_cont_int_term_5_proof} are uniformly bounded.  With this information at hand, we can let $B\to+\infty$ and  consider each summand individually. The terms in~\eqref{eq:anal_cont_int_term_1_proof} and~\eqref{eq:anal_cont_int_term_3_proof} vanish at $x=B^2 \to+\infty$ due to the presence of the factors $1/\sqrt x$ and $1/x$ that converge to $0$ and since all remaining factors are bounded. Evaluating these terms at $x=A^2$ gives~\eqref{eq:anal_cont_int_term_1} and~\eqref{eq:anal_cont_int_term_3}. Next, the integrals of the form $\int_{A^2}^{B^2}$ appearing in~\eqref{eq:anal_cont_int_term_2_proof}, \eqref{eq:anal_cont_int_term_4_proof}, \eqref{eq:anal_cont_int_term_5_proof} converge to the respective integrals of the form $\int_{A^2}^{+\infty}$ due to the presence of the terms $1/ x^{3/2}$ and $1/x^2$ that are absolutely integrable over $[A, \infty)$ and since all remaining terms are uniformly bounded. This yields~\eqref{eq:anal_cont_int_term_2}, \eqref{eq:anal_cont_int_term_4}, and~\eqref{eq:anal_cont_int_term_5}.

\vspace*{2mm}
\noindent
\emph{Proof of (b).}
Since $\Phi(z)$ is analytic, it is clear that~\eqref{eq:anal_cont_int_term_1} and~\eqref{eq:anal_cont_int_term_3}, considered as functions of $z$,  are continuous on $S_+$ and analytic on $\Im z >0$. Next let us consider~\eqref{eq:anal_cont_int_term_2},  since  the remaining summands, \eqref{eq:anal_cont_int_term_0}, \eqref{eq:anal_cont_int_term_4} and~\eqref{eq:anal_cont_int_term_5},  can be analyzed analogously.
To prove continuity of~\eqref{eq:anal_cont_int_term_2} on $S_+$, let $\{z_n\}_{n\in \N} \subseteq S_+$ be a sequence converging to $z\in S_+$. We need to show that
$$
\lim_{n\to\infty}
\int_{A^2}^{+\infty} G(z_n,x) \dd x
=
\int_{A^2}^{+\infty} G(z,x) \dd x,
$$
where $G(z,x) := \prod_{j=0}^d \Phi(\mu_j \sqrt{z} \omega \sqrt{x}) \cdot x^{-3/2} \eee^{-\omega^2 x/2}$. Since for each fixed $x\geq A^2$ we have $G(z_n, x) \to G(z,x)$ as $n\to\infty$,  the claim follows from the Lebesgue dominated convergence theorem upon observing that $|G(z_n,x)| \leq C^{d+1} x^{-3/2}$, which is integrable in $x$.  The analyticity of~\eqref{eq:anal_cont_int_term_2} on $\Im z >0$ follows from the standard theorem on differentiation under the integral sign;
see Lemma~1.1 on p.~409 in~\cite[Chapter~XV, \S 1]{lang_book_complex_analysis}. Indeed, for every fixed $x\geq A^2$, the function $z\mapsto G(z,x)$ is analytic on $\Im z>0$, and the existence of the integrable majorant $|G(z,x)| \leq C^{d+1} x^{-3/2}$ ensures the applicability of the above mentioned Lemma~1.1. The integrals appearing in~\eqref{eq:anal_cont_int_term_4} and~\eqref{eq:anal_cont_int_term_5} can be analyzed analogously, the integrable majorants being $C^d x^{-2}$ and $C^{d-1} x^{-3/2}$. In~\eqref{eq:anal_cont_int_term_0}, the integrand is bounded.
\end{proof}

Proposition~\ref{prop:integral_on_upper_halfplane} concerns an integral over an interval for $\kappa$ in the upper half-plane. The following result is the corresponding analogue for the lower half-plane.

\begin{proposition}\label{prop:integral_on_lower_halfplane}
Fix $d\in \N$ and $\mu_0, \ldots, \mu_d \in \R\backslash\{0\}$. Consider the set
$$
S_-:= \{z\in \CC: \Im z \leq 0\} \backslash \{-\mu_0^{-2},\ldots, -\mu_d^{-2}\}.
$$
\begin{itemize}
\item [(a)] Let now $\omega = 1 + \ii$. Then, for every  $z \in S_-$ and  every $A>0$, Equations~\eqref{eq:anal_cont_int_improper_int}--\eqref{eq:anal_cont_int_term_5} hold with the square root defined by the convention  $\sqrt{r \eee^{\ii\theta}} = \sqrt r \eee^{\ii \theta/2}$ for $r\geq 0$ and $\theta \in [-\pi,0]$ (so that this time $\sqrt {-1} = -\ii$). The limit in~\eqref{eq:anal_cont_int_improper_int} exists and is finite. All the integrals in~\eqref{eq:anal_cont_int_term_2}, \eqref{eq:anal_cont_int_term_4}, \eqref{eq:anal_cont_int_term_5}  converge absolutely.
\item[(b)] Equations~\eqref{eq:anal_cont_int_improper_int}--\eqref{eq:anal_cont_int_term_5}  define a function of  $z$ which is continuous on the set $S_-$ and analytic on the open lower half-plane $\{z\in \CC: \Im z < 0\}$.
\end{itemize}
\end{proposition}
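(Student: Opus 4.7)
The plan is to reduce Proposition \ref{prop:integral_on_lower_halfplane} to its upper-half-plane counterpart, Proposition \ref{prop:integral_on_upper_halfplane}, via a Schwarz reflection argument. The two propositions differ only by the replacement $\omega=1-\ii\leadsto\omega=1+\ii$ and by the choice of branch for the square root; these two modifications are interchanged under complex conjugation, so nothing essentially new has to be proved.

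The key observations are: (i) since $\Phi$ has real Taylor coefficients, the Schwarz reflection principle gives $\Phi(\bar w)=\overline{\Phi(w)}$ for every $w\in\CC$; (ii) for $z\in S_-$ one has $\bar z\in S_+$, and the two square-root conventions are related by $\sqrt{z}=\overline{\sqrt{\bar z}}$; and (iii) $\overline{1+\ii}=1-\ii$, so $\overline{(1+\ii)^2}=(1-\ii)^2$. Consequently, for every $z\in S_-$, $y\geq 0$ and $B>0$,
\begin{equation*}
\int_0^B \prod_{j=0}^d \Phi\bigl(\mu_j \sqrt{z}\,(1+\ii) y\bigr)\,\eee^{-(1+\ii)^2 y^2/2}(1+\ii)\,\dd y
\;=\;
\overline{\int_0^B \prod_{j=0}^d \Phi\bigl(\mu_j \sqrt{\bar z}\,(1-\ii) y\bigr)\,\eee^{-(1-\ii)^2 y^2/2}(1-\ii)\,\dd y}.
\end{equation*}
By Proposition \ref{prop:integral_on_upper_halfplane}(a), the right-hand side converges as $B\to\infty$, so the limit in \eqref{eq:anal_cont_int_improper_int} exists and defines a function $F_-$ on $S_-$ satisfying $F_-(z)=\overline{F_+(\bar z)}$, where $F_+$ denotes the function from Proposition \ref{prop:integral_on_upper_halfplane}.

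To obtain the closed-form representation \eqref{eq:anal_cont_int_term_0}--\eqref{eq:anal_cont_int_term_5} for $F_-(z)$, I will conjugate the corresponding expression for $F_+(\bar z)$ summand by summand. Each factor $\sqrt{\bar z}$, $1-\ii$, $(1-\ii)^2=-2\ii$, and $1+\mu_\ell^2\bar z$ transforms into its lower-half-plane counterpart $\sqrt{z}$, $1+\ii$, $(1+\ii)^2=2\ii$, and $1+\mu_\ell^2 z$; each factor $\Phi(\cdots)$ transforms by (i); and the real parameters $\mu_\ell$, $A$, $x$ are unchanged. The absolute convergence of the integrals in \eqref{eq:anal_cont_int_term_2}, \eqref{eq:anal_cont_int_term_4} and \eqref{eq:anal_cont_int_term_5} is conjugation-invariant, since $|\overline{f}|=|f|$.

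For part (b), the map $z\mapsto\bar z$ is an antiholomorphic bijection between $\{\Im z<0\}$ and $\{\Im z>0\}$, so analyticity of $F_+$ on the open upper half-plane (Proposition \ref{prop:integral_on_upper_halfplane}(b)) transfers to analyticity of $F_-(z)=\overline{F_+(\bar z)}$ on the open lower half-plane; continuity of $F_+$ on $S_+$ likewise yields continuity of $F_-$ on $S_-$ by composition with the continuous map $z\mapsto\bar z$ and with complex conjugation. The main obstacle is purely bookkeeping: verifying that the sign conventions for $\sqrt{\cdot}$ and for $\omega$ interact correctly under conjugation in each of the six summands of \eqref{eq:anal_cont_int_term_0}--\eqref{eq:anal_cont_int_term_5}. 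No new analytic input beyond Proposition \ref{prop:integral_on_upper_halfplane} is needed; alternatively one could simply repeat that proof verbatim with $\omega=1+\ii$, noting that $|\eee^{-\omega^2 x/2}|=1$ and $|\eee^{-\omega^2 x(1+\mu_\ell^2 z)/2}|=\eee^{\mu_\ell^2 x\,\Im z}\leq 1$ for $z\in S_-$, and that $\mu_j\sqrt{z}\omega\sqrt{x}$ lies in the sector $\{|\arg u|\leq\pi/4\}\cup\{|\arg(-u)|\leq\pi/4\}$ where $\Phi$ is bounded by Corollary \ref{cor:Phi_convergence_3}.
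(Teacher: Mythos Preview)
Your approach is correct and is essentially identical to the paper's own proof: the paper also deduces Proposition~\ref{prop:integral_on_lower_halfplane} from Proposition~\ref{prop:integral_on_upper_halfplane} by applying the latter to $\bar z\in S_+$ and taking complex conjugates of both sides of \eqref{eq:anal_cont_int_improper_int}--\eqref{eq:anal_cont_int_term_5}, using that $\overline{f(\bar z)}$ is analytic whenever $f$ is. Your write-up is in fact more detailed than the paper's two-line argument, explicitly recording the needed facts $\Phi(\bar w)=\overline{\Phi(w)}$, $\sqrt{z}=\overline{\sqrt{\bar z}}$, and $\overline{1-\ii}=1+\ii$.
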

\begin{proof}
This follows from Proposition~\ref{prop:integral_on_upper_halfplane} by taking the complex conjugate. More precisely, let $z\in S_-$, apply Proposition~\ref{prop:integral_on_upper_halfplane} ~(a) to $\bar z\in S_+$, and then take the complex conjugate of both sides of~\eqref{eq:anal_cont_int_improper_int} -- \eqref{eq:anal_cont_int_term_5}. For Part~(b), recall in addition that if $f(z)$ is analytic, then so is $\overline {f(\bar z)}$.
\end{proof}

The next lemma shows that when $\sqrt{z}$ is real, the contour of integration in the integral can be rotated without changing its value.
\begin{proposition}[Rotating the ray of integration]\label{prop:rotate_contour_integration}
Let $d\in \N$ and $\mu_0,\ldots, \mu_d \in \R\backslash\{0\}$. Then, for every real $z>0$, the value of the integral
\begin{equation}\label{eq:prop:rotate_contour_integration}
\int_0^{\omega \infty} \prod_{j=0}^d \Phi\left(\mu_j \sqrt{z} x \right) \cdot \eee^{-x^2/2} \dd x
\eqdef
\lim_{B \to +\infty} \int_0^{B} \prod_{j=0}^d \Phi\left(\mu_j \sqrt{z}  \omega y \right) \cdot \eee^{-\omega^2 y^2/2}\omega  \dd y
\end{equation}
does not depend on the choice of $\omega \in \CC \backslash \{0\}$ with $|\arg \omega| \leq \pi/4$.
\end{proposition}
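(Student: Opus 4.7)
The integrand $f(x) := \prod_{j=0}^d \Phi(\mu_j \sqrt{z}\, x) \cdot \eee^{-x^2/2}$ is an entire function of $x$, since $\Phi$ is entire and $\sqrt{z}>0$ is a fixed real number. The plan is to prove the claim by an application of Cauchy's theorem to a pie-slice contour, together with a Jordan-type estimate to kill the arc contribution. First I would observe that the value of $\int_0^{\omega\infty} f\,\dd x$ depends only on $\arg\omega$ and not on $|\omega|$: the substitution $y\mapsto y/|\omega|$ in the defining limit absorbs the modulus. So without loss of generality I may assume $|\omega_1|=|\omega_2|=1$ and write $\omega_j=\eee^{\ii\theta_j}$ with $\theta_1<\theta_2$ in $[-\pi/4,\pi/4]$.

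Next I would apply Cauchy's theorem to the pie-slice contour $\Gamma_R$ consisting of the radial segment from $0$ to $R\eee^{\ii\theta_1}$, the arc $\{R\eee^{\ii\theta}:\theta_1\le\theta\le\theta_2\}$, and the radial segment from $R\eee^{\ii\theta_2}$ back to $0$. Since $f$ is entire, $\oint_{\Gamma_R}f(x)\,\dd x=0$ for every $R>0$. As $R\to\infty$, the two radial integrals converge to $\int_0^{\omega_1\infty}f$ and $\int_0^{\omega_2\infty}f$, respectively: in the open sector $|\arg\omega|<\pi/4$ this follows from absolute convergence because $\Re(x^2)=r^2\cos(2\theta)>0$ there, while at the boundary rays $\theta=\pm\pi/4$ convergence was already established in Propositions~\ref{prop:integral_on_upper_halfplane} and~\ref{prop:integral_on_lower_halfplane} (applied via the scaling remark above). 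Hence the desired identity $\int_0^{\omega_1\infty}f=\int_0^{\omega_2\infty}f$ reduces to showing that the arc contribution vanishes in the limit.

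For the arc estimate, I would first note that for any $x$ with $|\arg x|\le\pi/4$ and any $\mu_j\in\R\setminus\{0\}$, the point $\mu_j\sqrt{z}\,x$ lies in $\{y:|\arg y|\le\pi/4\}\cup\{y:|\arg(-y)|\le\pi/4\}$, so by Corollary~\ref{cor:Phi_convergence_3} there is an absolute constant $C>0$ with $|\Phi(\mu_j\sqrt{z}\,x)|\le C$ uniformly on the entire closed sector. On the arc, $|\eee^{-x^2/2}|=\eee^{-R^2\cos(2\theta)/2}$, so the arc integral is bounded in modulus by
\begin{equation*}
C^{d+1}R\int_{\theta_1}^{\theta_2}\eee^{-R^2\cos(2\theta)/2}\,\dd\theta
\le C^{d+1}R\int_{-\pi/4}^{\pi/4}\eee^{-R^2\cos(2\theta)/2}\,\dd\theta.
\end{equation*}
Using symmetry and the substitution $\phi=\pi/2-2|\theta|$, the last integral equals $\int_0^{\pi/2}\eee^{-R^2\sin\phi/2}\,\dd\phi$, which by Jordan's inequality $\sin\phi\ge 2\phi/\pi$ is bounded by $\pi/R^2$. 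Therefore the arc contribution is $O(1/R)\to 0$, which completes the argument.

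The main obstacle lies precisely in the boundary case $|\theta|=\pi/4$: there $\cos(2\theta)=0$, so the Gaussian factor $\eee^{-x^2/2}$ provides no exponential decay on the arc and one cannot use a naive estimate $\eee^{-R^2 c/2}$ with $c>0$. The delicate point is that the linear growth $R$ of the arc length would overwhelm any merely polynomial decay coming from the $\Phi$ factors. This is saved by the fact that the \emph{angular average} of $\eee^{-R^2\cos(2\theta)/2}$ over the full sector decays like $1/R^2$ thanks to Jordan's inequality, which beats the $R$ factor with room to spare. No further input from the $\Phi$-factors is needed beyond their uniform boundedness on the sector.
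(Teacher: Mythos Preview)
Your argument is correct and follows essentially the same route as the paper: existence of the radial limits (absolute convergence in the open sector, Propositions~\ref{prop:integral_on_upper_halfplane}/\ref{prop:integral_on_lower_halfplane} on the boundary rays), Cauchy's theorem on a pie-slice, boundedness of the $\Phi$-factors via Corollary~\ref{cor:Phi_convergence_3}, and a Jordan-type estimate to kill the arc. The only cosmetic difference is that the paper compares each ray to the positive real axis rather than two arbitrary rays to each other; your handling of the arc-length factor $R$ is in fact slightly more explicit than the paper's.
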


\begin{proof}
	We first prove that the limit in~\eqref{eq:prop:rotate_contour_integration} exists for all  $\omega \in \CC \backslash \{0\}$ with $|\arg \omega| \leq \tfrac \pi4$. For $\omega = 1 - \ii$, this follows from Proposition~\ref{prop:integral_on_upper_halfplane}\,(a), and for $\omega = 1 + \ii$, it follows from Proposition~\ref{prop:integral_on_lower_halfplane}\,(a). Since the limit in~\eqref{eq:prop:rotate_contour_integration} does not change if we replace $\omega$ by $\lambda \omega$ with any $\lambda >0$, this settles the cases $\arg \omega = \pm \frac \pi 4$.
	Now, let $- \tfrac\pi4 < \arg \omega < \tfrac \pi4$. In this case, we prove a stronger claim, namely
	$$
	\lim_{B \to +\infty} \int_0^{B} \left|\prod_{j=0}^d \Phi\left(\mu_j \sqrt{z}  \omega y \right) \cdot \eee^{-\omega^2 y^2/2}\omega \right| \dd y < \infty.
	$$
	The term
	$
	\Phi(\mu_j \sqrt{z} \omega y )
	$
	is bounded  for $y\in [0,+\infty)$ by Corollary~\ref{cor:Phi_convergence_3}. Moreover,
	$
	| \eee^{-\omega^2y^{2}/2} |
	=
	\eee^{-\Re(\omega^{2})\, y^{2} / 2}
	$
	for $y\in [0,+\infty)$.
	Since $\Re(\omega^{2}) > 0$, the exponent is negative, hence $|\eee^{-\omega^2y^{2}/2}|$ tends to $0$ exponentially fast as $y \to +\infty$. Together, this proves the absolute convergence of the integral.
	
	Next, we show that the value of the integral does not depend on the chosen $\omega$.
	
	Let $\omega \in \CC \backslash \{0\}$ with $\arg \omega \in [-\tfrac{\pi}4, \tfrac{\pi}4] \backslash \{0\}$ and denote $\delta = \sgn (\arg \omega)$.
	Considering the circular arc $\gamma_{A, \omega}: [0,|\arg(\omega)|] \to \CC, \theta \mapsto A \eee^{\ii\, \delta \theta}$, by Cauchy's theorem it suffices to show that
	$$
	\lim_{A \rightarrow \infty}
	\int_{\gamma_{A, \omega}} \Big| \prod_{j=0}^d \Phi\left( \mu_j \sqrt{z} x \right) \cdot \eee^{-x^2/2} \dd x \Big|
	= 0.
	$$
	By Corollary~\ref{cor:Phi_convergence_3},  $\Phi(\mu_j \sqrt{z} x)$ is bounded on $\{x \in \CC: |\arg x| \leq \frac{\pi}4 \}$. Hence, it suffices to show that
	$
	\int_0^{\pi/4} |\eee^{-(A\eee^{\ii \, \delta \theta})^2/2} A\, \ii \, \delta \eee^{\ii\, \delta \theta} | \dd \theta \to 0
	$
	as $A \to +\infty$.
	For all $\theta$ in the interval of integration, we have
$$
	\Big| \eee^{-(A\eee^{\ii\,\delta \theta})^2/2} A\, \ii \, \delta \eee^{\ii \, \delta \theta} \Big|
	=
	A\eee^{\Re(-(A\eee^{\ii\, \delta \theta})^2/2)}
	=
	A \eee^{-\frac{A^2}{2} \Re((\eee^{\ii\,\delta \theta})^2)}
	=
	A \eee^{-\frac{A^2}{2} \cos(2 \theta)}
	=
	A \eee^{-\frac{A^2}{2} \sin(\frac{\pi}{2}-2\theta)}.
$$
	The substitution $x=\tfrac{\pi}4-\theta$ yields
	$$
	\int_0^{\pi/4} |\eee^{-(A \eee^{\ii\, \delta \theta})^2/2} A\, \ii \, \delta \eee^{\ii\, \delta \theta}| \dd \theta
	=
	\int_0^{\pi/4} A \eee^{-\frac{A^2}{2} \sin(2x)} \dd x.
	$$
There exists some $c>0$ such that $\sin (2x) \geq cx$ for all $x\in [0,\pi/4]$, and we can conclude
	$$
	\int_0^{\pi/4} |\eee^{-(A\eee^{\ii\, \delta \theta})^2/2} A\, \ii \, \delta \eee^{\ii \,\delta \theta}| \dd \theta
	\leq
	\int_0^{\pi/4} A \eee^{-\frac{A^2}2 c x} \dd x
	=
	A\frac{2}{c A^2} \left( 1-\eee^{-\frac{A^2}2 c \frac{\pi}4} \right)
	\xrightarrow[A \to +\infty]{}
	0.
	$$
\end{proof}

Let us summarize what we have proved so far.
\begin{proposition}\label{prop:summary_properties_anal_cont_int}
Let $d\in \N$ and $\mu_0,\ldots, \mu_d \in \R\backslash\{0\}$.
\begin{itemize}
\item [(a)] The  function
\begin{equation}\label{eq:def_F_+_integral}
F_{+}(z) := \frac 1{\sqrt{2\pi}}\int_{0}^{(1-\ii) \infty} \left( \prod_{j=0}^d \Phi \left(\mu_j\sqrt{z} x \right) + \prod_{j=0}^d \Phi \left(- \mu_j \sqrt{z} x \right) \right) \eee^{-\frac{x^2}2} \dd x
\end{equation}
is well defined  and continuous on the set $S_+= \{z\in \CC: \Im z \geq 0\} \bsl \{-\mu_0^{-2},\ldots, -\mu_d^{-2}\}$, with the convention  $\sqrt{r \eee^{\ii\theta}} = \sqrt r \eee^{\ii \theta/2}$ for $r\geq 0$ and $\theta \in [0,\pi]$.  Moreover, $F_+(z)$ is an analytic function on the open upper half-plane $\{z\in \CC:\Im z >0\}$.
\item [(b)] The  function
\begin{equation}\label{eq:def_F_-_integral}
F_{-}(z) := \frac 1{\sqrt{2\pi}} \int_{0}^{(1+\ii) \infty} \left( \prod_{j=0}^d \Phi \left(\mu_j\sqrt{z} x \right) + \prod_{j=0}^d \Phi \left(- \mu_j \sqrt{z} x \right) \right) \eee^{-\frac{x^2}2} \dd x
\end{equation}
is well defined  and continuous on the set $S_-= \{z\in \CC: \Im z \leq 0\} \bsl \{-\mu_0^{-2},\ldots, -\mu_d^{-2}\}$, with the convention  $\sqrt{r \eee^{\ii\theta}} = \sqrt r \eee^{\ii \theta/2}$ for $r\geq 0$ and $\theta \in [-\pi,0]$.  Moreover, $F_-(z)$ is an analytic function on the open  lower half-plane $\{z\in \CC: \Im z < 0\}$.
\item [(c)] For every real $z>0$, we have
$$
F_{+}(z) = F_{-}(z) = \frac 1{\sqrt{2\pi}}  \int_{0}^{+\infty} \left( \prod_{j=0}^d \Phi \left(\mu_j\sqrt{z} x \right) + \prod_{j=0}^d \Phi \left(- \mu_j \sqrt{z} x \right) \right) \eee^{-\frac{x^2}2} \dd x.
$$
On the right-hand side, we can replace $\int_0^{+\infty}$  by $\int_0^{\omega \infty}$, for every $\omega\in \CC\bsl \{0\}$ with $|\arg \omega|\leq \frac \pi 4$.
\item [(d)] The following function is analytic on $\CC\backslash (-\infty, 0]$:
$$
F(z) :=
\begin{cases}
F_{+}(z) \text{ if } & \Im z >0 \text{ or } z\in (0,\infty),
\\
F_-(z)  \text{ if } & \Im z < 0 \text{ or } z\in (0,\infty).
\end{cases}
$$
\item [(e)]
For every $z\in  S_+$ we have $F_+(z) = \overline {F_-(\bar z)}$.
\end{itemize}
\end{proposition}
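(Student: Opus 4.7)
The proposition is essentially a consolidation of the earlier results, so the plan is to match each claim to the corresponding tool already established. First I would observe that the integrand in both~\eqref{eq:def_F_+_integral} and~\eqref{eq:def_F_-_integral} is a sum of two integrands of the form considered in Propositions~\ref{prop:integral_on_upper_halfplane} and~\ref{prop:integral_on_lower_halfplane}, once with the tuple $(\mu_0,\ldots,\mu_d)$ and once with $(-\mu_0,\ldots,-\mu_d)$. Since $\{-(-\mu_j)^{-2}\}=\{-\mu_j^{-2}\}$, the exceptional set is the same for both summands, and this immediately yields (a) and (b), as continuity on $S_\pm$ and analyticity on the open half-planes are preserved under finite sums. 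For (c), when $z>0$ is real, $\sqrt z$ is real and positive, so I would apply Proposition~\ref{prop:rotate_contour_integration} to each of the two products: the rays $(1-\ii)\infty$, $(1+\ii)\infty$ and $[0,+\infty)$ all produce the same limit, and more generally so does $\omega\infty$ for any $\omega$ with $|\arg\omega|\le\pi/4$.

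For claim (e), my symmetry argument rests on the identity $\overline{\Phi(w)}=\Phi(\bar w)$, which holds because $\Phi$ is entire and real on $\R$. Since $\mu_j\in\R$, conjugating the integrand in~\eqref{eq:def_F_+_integral} replaces $\sqrt z\,x$ by $\overline{\sqrt z}\,\bar x$. The key compatibility is that for $z=re^{\ii\theta}\in S_+$ with $\theta\in[0,\pi]$, the upper-half-plane convention gives $\sqrt z=\sqrt r\,e^{\ii\theta/2}$, while the lower-half-plane convention applied to $\bar z=re^{-\ii\theta}$ gives $\sqrt{\bar z}=\sqrt r\,e^{-\ii\theta/2}=\overline{\sqrt z}$. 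Parametrizing the contour as $x=(1-\ii)y$ with $y\ge 0$ and conjugating then turns $(1-\ii)\infty$ into $(1+\ii)\infty$, producing exactly $F_-(\bar z)$.

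The only step requiring genuinely new work is (d), and this is really the main obstacle: the integral representations of $F_+$ and $F_-$ use different square-root conventions, so one has to check that the pieces glue into a single-valued analytic function. The function $F$ is well defined on $\CC\bsl(-\infty,0]$ because $F_+$ and $F_-$ agree on $(0,\infty)$ by (c); the exceptional points $-\mu_j^{-2}$ lie in $(-\infty,0)$ and do not interfere. Continuity on $\CC\bsl(-\infty,0]$ follows from (a) and (b), and analyticity on the open upper and lower half-planes is already known. To extend analyticity across $(0,\infty)$ I would invoke Painlev\'e's continuation theorem: a function continuous on a domain and analytic off a smooth curve in that domain is analytic everywhere on the domain. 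Alternatively, one can check Morera's hypothesis directly by splitting any triangular contour in $\CC\bsl(-\infty,0]$ along $\R$, applying Cauchy's theorem to the upper and lower pieces, and using the continuity of $F$ at the axis to pass to the limit. Either way, the analytic continuation itself is standard once the matching of conventions has been verified.
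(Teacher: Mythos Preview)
Your proposal is correct and follows essentially the same route as the paper: parts (a)--(c) are deduced from Propositions~\ref{prop:integral_on_upper_halfplane}, \ref{prop:integral_on_lower_halfplane}, and~\ref{prop:rotate_contour_integration}, part (e) is obtained directly from the definitions via $\overline{\Phi(w)}=\Phi(\bar w)$ and the compatibility of the two square-root branches, and part (d) is the gluing of $F_+$ and $F_-$ across $(0,\infty)$. The only cosmetic difference is that the paper phrases the gluing step as an application of the Schwarz reflection principle (in the form given in Lang, Chapter~IX, \S1, Theorem~1.1), which is the same statement you call Painlev\'e's continuation theorem.
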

\begin{proof}
Parts (a) and (b) follow from Propositions~\ref{prop:integral_on_upper_halfplane} and~\ref{prop:integral_on_lower_halfplane}, respectively. Part~(c) follows from Proposition~\ref{prop:rotate_contour_integration}. Since $F_+$ and $F_-$ agree on $[0,+\infty)$, Part~(d) then follows from the Schwarz reflection principle; see Theorem~1.1 on p.~294 in~\cite[Chapter~IX, \S 1]{lang_book_complex_analysis}. Part~(e) follows from the same principle or directly from the definitions of $F_+$ and $F_-$ given in~\eqref{eq:def_F_+_integral} and~\eqref{eq:def_F_-_integral}.
\end{proof}

\subsection{Proof of Theorem~\ref{theo:vol_acute_orthoc_simpl_in_Klein_model_any_dimension_1-i}}
\label{subsec:proof_formula_hyperbolic_volume_complete}

We are now in position to complete the proof of our main result, Theorem~\ref{theo:vol_acute_orthoc_simpl_in_Klein_model_any_dimension_1-i}, which we restate here for convenience.
\begin{theorem}\label{theo:vol_acute_orthoc_simpl_in_Klein_model_any_dimension_1-i_proof}
Fix $d\geq 2$, $\tau_0,\ldots, \tau_d>0$ and define $s=\tau_0^2 +\ldots + \tau_d^2$. Consider a simplex $Q := [v_0, \ldots, v_d]\subseteq \R^d$, where  $v_0,\ldots, v_d\in \R^d$ are vectors in orthocentric position  with parameters $\tau_0,\ldots, \tau_d$; see Definition~\ref{def:orthocentric_position}. Then, for all $\kappa \neq 0$ such that $Q\subseteq \bar{\BB}^d(\kappa)$, we have
\begin{equation}\label{eq:vol_acute_orthoc_simpl_in_Klein_model_even_and_odd_upper_proof}
\Vol_{d,\kappa}(Q)
=
\frac{\omega_{d+1}}{\sqrt{\kappa}^{d}} \frac1{\sqrt{2\pi}} \int_{0}^{(1-\ii)\infty} \left( \prod_{j=0}^d \Phi \left( \frac{\tau_j}{s} \sqrt{\kappa -s} x \right) + \prod_{j=0}^d \Phi \left(- \frac{\tau_j}{s} \sqrt{\kappa -s} x \right) \right) \eee^{-\frac{x^2}2} \dd x,
\end{equation}
with the convention that $\sqrt{-r} = \ii \sqrt r$ for $r\geq 0$.  The formula remains valid if $\int_{0}^{(1-\ii)\infty}$ is replaced by $\int_{0}^{(1+\ii)\infty}$, this time with the opposite convention  $\sqrt{-r} = -\ii \sqrt r$ for $r\geq 0$.
\end{theorem}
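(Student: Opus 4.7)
The plan is to use analytic continuation to lift the real-variable identity of Proposition~\ref{prop:vol_spher_orthocentr_simplex_kappa_greater_s}, which is valid only for $\kappa\geq s$, to the full admissible range of $\kappa$.  Let $V(\kappa) := \Vol_{d,\kappa}(Q)$ and, using Proposition~\ref{prop:summary_properties_anal_cont_int} with parameters $\mu_j := \tau_j/s$, define
\[
G_+(\kappa) \;:=\; \frac{\omega_{d+1}}{\sqrt{\kappa}^{d}}\, F_+(\kappa - s),
\]
where $\sqrt{\kappa}$ is the principal branch of the square root, with branch cut along $(-\infty,0]$.  By Lemma~\ref{lemma:Vol_d_kappa_analytic}, $V$ is analytic on $\CC \setminus (-\infty, \kappa_0]$.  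By Proposition~\ref{prop:summary_properties_anal_cont_int}\,(a), $F_+(\kappa - s)$ is analytic on the open upper half-plane $H_+:=\{\Im \kappa > 0\}$, so $G_+$ is also analytic on $H_+$.  Moreover, combining Proposition~\ref{prop:vol_spher_orthocentr_simplex_kappa_greater_s} with Proposition~\ref{prop:summary_properties_anal_cont_int}\,(c) yields $V(\kappa) = G_+(\kappa)$ for every real $\kappa \geq s$.

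Next, I would invoke a boundary identity principle.  The difference $V - G_+$ is analytic on $H_+$, continuous on $H_+ \cup (s, +\infty)$, and vanishes on the real interval $(s, +\infty)$.  By the Schwarz reflection principle it extends analytically across $(s,+\infty)$, and the extension vanishes on a real interval, hence is identically zero on $H_+$.  Propagating this identity to the real axis by continuity (using that $V$ is analytic on all of $(\kappa_0,0)\cup(0,+\infty)$) establishes \eqref{eq:vol_acute_orthoc_simpl_in_Klein_model_even_and_odd_upper_proof} for every real $\kappa > \kappa_0$ with $\kappa\neq 0$; the convention $\sqrt{-r} = \ii\sqrt{r}$ arises precisely as the limit of the principal branch from $H_+$.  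The second assertion, with $\int_0^{(1+\ii)\infty}$, follows by the symmetric argument using $F_-$ from Proposition~\ref{prop:summary_properties_anal_cont_int}\,(b) on the lower half-plane, which produces the opposite convention $\sqrt{-r}=-\ii\sqrt{r}$.

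The main technical obstacle is ensuring that the continuous passage of the identity $V=G_+$ from $H_+$ to the real axis is not obstructed inside the admissible interval $(\kappa_0,0)$.  By Proposition~\ref{prop:summary_properties_anal_cont_int}, $F_+$ is continuous on $S_+$ except at the points $\{-s^2/\tau_j^2\}_{j=0}^d$, which, shifted by $s$, correspond to real values $\kappa_j^{*} := s - s^2/\tau_j^2 < 0$.  I would verify that for $d\geq 2$ one has $\kappa_j^{*} \leq \kappa_0$ for every $j$, so that these singular points lie outside $(\kappa_0,0)$ and cannot interfere with the boundary limit.  A short calculation reduces this to the inequality $s \geq \tau_j^2 + \tau_{\min}^2$, which holds because $s = \tau_0^2+\cdots+\tau_d^2$ contains at least three positive summands whenever $d\geq 2$.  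This technical check, together with the $1/\sqrt{\kappa}^d$ factor remaining analytic on $H_+$ and continuous up to $\R\setminus\{0\}$, is what makes the Schwarz-reflection step rigorous.
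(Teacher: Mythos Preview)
Your argument follows essentially the same route as the paper: establish the identity for $\kappa\geq s$ via Proposition~\ref{prop:vol_spher_orthocentr_simplex_kappa_greater_s} and Proposition~\ref{prop:summary_properties_anal_cont_int}\,(c), analytically continue through the upper (resp.\ lower) half-plane using $F_+$ (resp.\ $F_-$), and pass to the real axis after checking that the excluded points $s-s^2/\tau_j^2$ all lie at or below $\kappa_0$. The paper packages the uniqueness step slightly differently---it first glues $F_+$ and $F_-$ into $F$ on $\CC\setminus(-\infty,0]$ (Proposition~\ref{prop:summary_properties_anal_cont_int}\,(d)) and then applies the identity theorem on $\CC\setminus(-\infty,s]$---but your Schwarz-reflection argument on $V-G_+$ is an equivalent way to reach the same conclusion.

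The one genuine omission is the boundary case $\kappa=\kappa_0$. The theorem asks for all $\kappa\neq 0$ with $Q\subseteq\bar\BB^d(\kappa)$, which by Lemma~\ref{lemma:lower_limit_kappa_simplex} includes $\kappa=\kappa_0$ (where some vertices lie on the ideal boundary), yet you only conclude the formula for $\kappa>\kappa_0$. The paper handles this separately: one lets $\kappa\downarrow\kappa_0$, uses monotone convergence on the volume integral to get $V(\kappa)\to V(\kappa_0)$, and uses the continuity of $F_\pm$ at $\kappa_0-s$ (which your inequality $\kappa_j^*<\kappa_0$ guarantees, and which is indeed strict since $s>\tau_j^2+\tau_{\min}^2$ for $d\geq 2$) to pass to the limit on the right-hand side.
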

\begin{proof}
Without loss of generality, let $\tau_0 = \min \{\tau_0,\ldots, \tau_d\}$. Recall from  Lemma~\ref{lemma:lower_limit_kappa_simplex} that $Q \subseteq \bar \BB^d(\kappa)$ is equivalent to $\kappa \geq  \kappa_0$, where 
$$
\kappa_0
= -\frac{\tau_0^2 s}{s-\tau_0^2} <0.
$$

\vspace*{2mm}
\noindent
\emph{The case where $\kappa\geq s$.}
For $\kappa\geq s$, we already know from Proposition~\ref{prop:vol_spher_orthocentr_simplex_kappa_greater_s} that
\begin{equation}\label{eq:vol_equals_int_proof_main_theo}
\frac{\Vol_{d,\kappa}(Q)}{\omega_{d+1}}
=
\frac{1}{\sqrt{\kappa}^{d}} \frac1{\sqrt{2\pi}} \int_{0}^{+\infty} \left( \prod_{j=0}^d \Phi \left( \frac{\tau_j}{s} \sqrt{\kappa -s} x \right) + \prod_{j=0}^d \Phi \left(- \frac{\tau_j}{s} \sqrt{\kappa -s} x \right) \right) \eee^{-\frac{x^2}2} \dd x.
\end{equation}
By Proposition~\ref{prop:rotate_contour_integration}, we can replace $\int_{0}^{+\infty}$ by $\int_0^{(1-\ii) \infty}$  or $\int_0^{(1+\ii) \infty}$ without changing the value of the integral. This yields~\eqref{eq:vol_acute_orthoc_simpl_in_Klein_model_even_and_odd_upper_proof} and settles the case where $\kappa \geq s$.

\vspace*{2mm}
\noindent
\emph{Analytic continuation.}
To deal with the case $\kappa \in (\kappa_0, \infty)\backslash\{0\}$, we shall study the analytic continuation of the functions appearing in~\eqref{eq:vol_equals_int_proof_main_theo}.
Consider the function
$$
H(\kappa) :=  \frac{\Vol_{d,\kappa}(Q)}{\omega_{d+1}}  = \frac 1 {\omega_{d+1}} \int_Q \frac{\dd y}{(1+\kappa \| y \|^2)^{(d+1)/2}},
\qquad
\kappa \geq \kappa_0.
$$
Define also the function $F(z)$ as in  Proposition~\ref{prop:summary_properties_anal_cont_int} with $\mu_0 := \tau_0 /s,\ldots, \mu_d := \tau_d /s$. With this notation, \eqref{eq:vol_equals_int_proof_main_theo} takes the form
$$
H(\kappa) = \kappa^{-d/2} F(\kappa-s) \qquad  \text{ for all real } \kappa \geq s.
$$
Now, by Lemma~\ref{lemma:Vol_d_kappa_analytic},  the function $H(\kappa)$  admits an analytic continuation to the domain $\CC\bsl (-\infty, \kappa_0]$. On the other hand,  it follows from Proposition~\ref{prop:summary_properties_anal_cont_int} that the function $\kappa \mapsto \kappa^{-d/2} F(\kappa-s)$ admits analytic continuation to the domain $\CC\backslash (-\infty, s]$. Observe that $\CC\backslash (-\infty, s] \subseteq \CC\bsl (-\infty, \kappa_0]$ since $\kappa_0 < 0 < s$. The uniqueness theorem for analytic functions implies that
\begin{equation}\label{eq:H_equals_F_complex_argument}
H(\kappa) = \kappa^{-d/2} F(\kappa-s) \qquad  \text{ for all } \kappa \in \CC\backslash (-\infty, s].
\end{equation}
From Proposition~\ref{prop:summary_properties_anal_cont_int} (a), (b) we also know that the limits $\lim_{t\downarrow 0} F(z+ \ii t) = F_+(z)$ and $\lim_{t\downarrow 0} F(z - \ii t) = F_-(z)$ exist for all $z\in \R\backslash \{-s^2/\tau_0^2,\ldots, -s^2/\tau_d^2\}$, where we recall that $\mu_j = \tau_j /s$.

Let us verify that $\kappa_0 - s > -s^2/\tau_j^2$ for all $j=0,\ldots, d$ and $d\geq 2$. Indeed,
\begin{align*}
1+\frac{\tau_j^2}{s^2} (\kappa_0-s)
=
1+\frac{\tau_j^2}{s^2} \left(-\frac{\tau_0^2 s}{s-\tau_0^2} -s \right)
=
1-\frac{\tau_j^2s^2}{s^2(s-\tau_0^2)}
=
\frac{s-\tau_0^2-\tau_j^2}{s-\tau_0^2}
> 0,
\end{align*}
where we used that $s= \tau_0^2 + \ldots + \tau_d^2$ and $d\geq 2$ in the last step.

\vspace*{2mm}
\noindent
\emph{Completing the proof.}
Take some $\kappa \in (\kappa_0, \infty)$ with $\kappa \neq 0$. Using first the analyticity of $H$ at $\kappa$, then the equality  $H(z) = z^{-d/2} F_+(z-s)$ that is valid for $\Im z >0$, see~\eqref{eq:H_equals_F_complex_argument},  and finally the continuity of $F_+$ at $\kappa-s > \kappa_0 - s > -s^2/\tau_j^2$, we obtain
$$
H(\kappa) = \lim_{t\downarrow 0} H(\kappa + \ii t) = \lim_{t\downarrow 0} (\kappa+\ii t)^{-d/2}F_+(\kappa + \ii t - s) = {\sqrt \kappa}^{-d} F_+(\kappa-s),
$$
with the convention $\sqrt{\kappa} = \ii |\kappa|^{1/2}$ if $\kappa <0$.  Similarly, approaching $\kappa$ from the lower half-plane, one gets
$$
H(\kappa) = \lim_{t\downarrow 0} H(\kappa - \ii t) = \lim_{t\downarrow 0} (\kappa-\ii t)^{-d/2}F_-(\kappa - \ii t - s) =  {\sqrt \kappa}^{-d} F_-(\kappa-s),
$$
this time with the opposite convention  $\sqrt{\kappa} = -\ii |\kappa|^{1/2}$ if $\kappa <0$.
Taking everything together gives
\begin{align}
\frac{\Vol_{d,\kappa}(Q)}{\omega_{d+1}}  &= H(\kappa) = \frac{F_+(\kappa-s)}{\ii^{d} |\kappa|^{d/2}} = \frac{F_-(\kappa-s)}{(-\ii)^{d} |\kappa|^{d/2}},
\qquad
&\text{ if } \kappa \in (\kappa_0, 0),  \label{eq:Vol_eq_H_eq_F_+_F_-_1}
\\
\frac{\Vol_{d,\kappa}(Q)}{\omega_{d+1}}  &= H(\kappa) = \frac{F_+(\kappa-s)}{\kappa^{d/2}} = \frac{F_-(\kappa-s)}{\kappa^{d/2}},
\qquad
&\text{ if } \kappa \in (0,\infty). \label{eq:Vol_eq_H_eq_F_+_F_-_2}
\end{align}
To complete the proof for $\kappa \in (\kappa_0,\infty)$, recall the definitions of $F_+$ and $F_-$ given in~\eqref{eq:def_F_+_integral}, \eqref{eq:def_F_-_integral}.

Finally, to treat the remaining case $\kappa = \kappa_0$, we first observe that,  by monotone convergence,  $H(\kappa) \uparrow  H(\kappa_0)$ as $\kappa \downarrow \kappa_0$. On the other hand, by Proposition~\ref{prop:summary_properties_anal_cont_int} (a), (b),  the functions $F_+$ and $F_-$ are continuous at $\kappa_0 -s$ since $\kappa_0 - s > -s^2/\tau_j^2$, as we have shown above. Thus, letting $\kappa\downarrow \kappa_0$ in~\eqref{eq:Vol_eq_H_eq_F_+_F_-_1} shows that this equation remains valid for $\kappa=\kappa_0$.
\end{proof}

As a byproduct of the above proof, we obtain a property of the functions $F_+$ and $F_-$ which is  not directly evident from their analytic definition.
\begin{corollary}
Let $d\geq 2$, $\mu_0, \ldots,  \mu_d>0$ and suppose that $\mu_0 = \min \{\mu_0,\ldots, \mu_d\}$.
Put $r = 1/(\mu_1^2 + \ldots + \mu_d^2)$ and $s=1/(\mu_0^2+\ldots + \mu_d^2)$. Then, the following hold.
\begin{itemize}
\item[(a)]  $F_+(-s) = F_-(-s) = 0$.
\item [(b)] If $d$ is even, then $F_+(z) = F_-(z) \in \R$ for all $z\in [-r, \infty)$. Consequently, the function $F$ admits analytic continuation to $\CC \backslash (-\infty, -r]$.
\item[(c)] If $d$ is  odd, then $F_+(z) = - F_-(z) \in \ii \R$ for all $z\in [-r, -s]$. Also, $F_+(z) = F_-(z)>0$ for all $z\in (-s, \infty)$. Consequently, the function $F$ admits analytic continuation to $\CC \backslash (-\infty, -s]$.
\end{itemize}
\end{corollary}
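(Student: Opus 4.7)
My approach applies the identification between $F_\pm$ and hyperbolic volumes established in the proof of Theorem~\ref{theo:vol_acute_orthoc_simpl_in_Klein_model_any_dimension_1-i_proof}. I would choose $\tau_j := \mu_j/(\mu_0^2+\ldots+\mu_d^2)$ so that the theorem's $s_{\mathrm{thm}} := \sum_j \tau_j^2$ equals the corollary's $s$ and the ratios $\tau_j/s_{\mathrm{thm}}$ equal $\mu_j$. A short algebraic check then shows that the associated $\kappa_0^{\mathrm{thm}} := -\tau_0^2 s/(s-\tau_0^2)$ satisfies $\kappa_0^{\mathrm{thm}} - s = -r$. With this identification, the $F_\pm$ of the corollary coincide with those appearing in the theorem's proof, and Equations~\eqref{eq:Vol_eq_H_eq_F_+_F_-_1}--\eqref{eq:Vol_eq_H_eq_F_+_F_-_2} express $F_\pm(z)$, with $z=\kappa - s \in (-r,\infty)\setminus\{-s\}$, in terms of $H(\kappa) := \Vol_{d,\kappa}(Q)/\omega_{d+1}$ for the simplex $Q$ determined by the $\tau_j$'s.

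\textbf{Part (a).} By Lemma~\ref{lemma:Vol_d_kappa_analytic}, $H(\kappa)$ is analytic, and in particular bounded, in a neighborhood of $\kappa = 0$. Equations~\eqref{eq:Vol_eq_H_eq_F_+_F_-_2} and~\eqref{eq:Vol_eq_H_eq_F_+_F_-_1} then give $F_+(\kappa-s) = \kappa^{d/2} H(\kappa) \to 0$ as $\kappa \to 0^+$ and $F_+(\kappa-s) = \ii^d |\kappa|^{d/2} H(\kappa) \to 0$ as $\kappa \to 0^-$. Continuity of $F_+$ at $-s$, which holds since $-s \neq -\mu_j^{-2}$ for every $j$ (easy to verify from $d\geq 2$ and $\mu_j>0$), yields $F_+(-s)=0$; the identical argument gives $F_-(-s)=0$.

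\textbf{Parts (b) and (c).} For $z \in (-r,-s)$, i.e.\ $\kappa \in (\kappa_0^{\mathrm{thm}}, 0)$, Equation~\eqref{eq:Vol_eq_H_eq_F_+_F_-_1} reads $F_+(z) = \ii^d |\kappa|^{d/2} H(\kappa)$ and $F_-(z) = (-\ii)^d |\kappa|^{d/2} H(\kappa)$, with $H(\kappa)$ real and positive. When $d$ is even, $\ii^d = (-\ii)^d = (-1)^{d/2} \in \R$, so $F_+(z) = F_-(z) \in \R$; when $d$ is odd, $\ii^d = -(-\ii)^d \in \ii\R$, so $F_+(z) = -F_-(z) \in \ii\R$. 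For $z \in (-s,\infty)$, i.e.\ $\kappa > 0$, Equation~\eqref{eq:Vol_eq_H_eq_F_+_F_-_2} gives $F_+(z) = F_-(z) = \kappa^{d/2} H(\kappa) > 0$. The endpoint $z=-s$ is covered by Part~(a), and $z=-r$ follows from continuity of $F_\pm$ there, once one checks using $d\geq 2$ and $\mu_0 = \min$ that no point $-\mu_j^{-2}$ lies in $[-r,\infty)$.

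\textbf{Analytic extension.} In case (b), $F_+$ and $F_-$ agree and are real on the entire interval $(-r,\infty)$, so the Schwarz reflection principle (applied exactly as in Proposition~\ref{prop:summary_properties_anal_cont_int}\,(d)) extends $F$ analytically across this interval to $\CC \setminus (-\infty,-r]$. In case (c), $F_+$ and $F_-$ coincide only on $(-s,\infty)$; on $[-r,-s]$ they have opposite signs, which produces a genuine jump across that sub-interval and precludes continuation there. Hence Schwarz reflection extends $F$ only to $\CC \setminus (-\infty,-s]$. The only non-trivial obstacle I anticipate is the parameter bookkeeping that matches the corollary's $F_\pm$ with those arising for the simplex $Q$, together with the small continuity check excluding $-r$ and $-s$ from the set of singular points; after these are in place, the result is a direct reading of~\eqref{eq:Vol_eq_H_eq_F_+_F_-_1}--\eqref{eq:Vol_eq_H_eq_F_+_F_-_2}.
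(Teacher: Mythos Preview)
Your proposal is correct and follows essentially the same route as the paper: the same choice $\tau_j=\mu_j/\sum_k\mu_k^2$, the same identification $\kappa_0-s=-r$, and the same reading of~\eqref{eq:Vol_eq_H_eq_F_+_F_-_1}--\eqref{eq:Vol_eq_H_eq_F_+_F_-_2}. You spell out the Schwarz-reflection step for the analytic continuation claims a bit more explicitly than the paper (which simply says ``Claims~(b) and (c) follow from~\eqref{eq:Vol_eq_H_eq_F_+_F_-_1} and~\eqref{eq:Vol_eq_H_eq_F_+_F_-_2}''), and your remark that the jump on $[-r,-s]$ ``precludes continuation there'' is extra commentary not required by the statement, but otherwise the arguments match.
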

\begin{proof}
Define $\tau_j = \mu_j/(\mu_0^2+\ldots+\mu_d^2)$ for $j=0,\ldots, d$. Then, $\tau_0^2 + \ldots +\tau_d^2 = 1/(\mu_0^2+\ldots + \mu_d^2) = s$ and $\kappa_0 -s = s/(\mu_0^2 s-1) = -1/(\mu_1^2 + \ldots + \mu_d^2) = -r$. Observe that $F_+$ and $F_-$ are continuous on $[-r,\infty)$ since $-\mu_j^{-2} < -r$ for all $j\in \{0,\ldots, d\}$.  To prove~(a), let $\kappa \downarrow 0$ in~\eqref{eq:Vol_eq_H_eq_F_+_F_-_2}. The limit of the left-hand-side is  $\Vol_{d,0}(Q)/\omega_{d+1}$, by monotone convergence. It follows that the limit of the other terms in~\eqref{eq:Vol_eq_H_eq_F_+_F_-_2} must be finite, which implies $F_+(-s) = F_-(-s) = 0$.
Claims~(b) and (c) follow from~\eqref{eq:Vol_eq_H_eq_F_+_F_-_1} and~\eqref{eq:Vol_eq_H_eq_F_+_F_-_2}.
\end{proof}

\section*{Acknowledgement}
Supported by the German Research Foundation under Germany's Excellence Strategy  EXC 2044 -- 390685587, Mathematics M\"unster: Dynamics - Geometry - Structure and by the DFG priority program SPP 2265 Random Geometric Systems.

\addcontentsline{toc}{section}{References}
\bibliography{volume_hyperbolic_simplices_bib}
\bibliographystyle{plainnat}

\vspace{1cm}

\footnotesize

\textsc{Zakhar Kabluchko: Institut f\"ur Mathematische Stochastik,
	Universit\"at M\"unster,
	Orl\'eans-Ring 10,
	48149 M\"unster, Germany}\\
\textit{E-mail}: \texttt{zakhar.kabluchko@uni-muenster.de}\\

\textsc{Philipp Schange: Institut f\"ur Mathematische Stochastik, Universit\"at M\"unster,
	Orl\'eans-Ring 10,
	48149 M\"unster, Germany}\\
\textit{E-mail}: \texttt{philipp.schange@uni-muenster.de}

\end{document}